\newcommand{\R}{{\bf{R}}}
\newcommand{\I}{{\bf{I}}}
\newcommand{\Ceabe}{\mathrm{C}[a,b]}
\newcommand{\HH}{\cal{H}}
\newcommand{\NN}{\cal{N}}
\newcommand{\FF}{\cal{F}}
\newcommand{\dd}{\textrm{d}}
\newcommand{\Int}{\int\limits}
\newcommand{\Sum}{\sum\limits}
\newcommand{\Sup}{\sup\limits}
\newcommand{\Inf}{\inf\limits}
\newcommand{\Lim}{\lim\limits}
\newcommand{\E}{\textbf{E}}
\newcommand{\Var}{\mathsf{Var}\:}
\newcommand{\Pb}{\textsf{P}}
\newcommand{\Cov}{\textsf{cov}}
\newtheorem{thm}{Theorem}[section]
\newtheorem{cor}{Corollary}[section]
\newtheorem{lem}{Lemma}[section]
\theoremstyle{definition}
\newtheorem{rem}{Remark}[section]
\theoremstyle{definition}
\newtheorem{expl}{Example}[section]
\renewcommand\@biblabel[1]{}
\begin{document}
\begin{center}
\textbf{ON CROSS-CORRELOGRAM IRF'S ESTIMATORS OF TWO-OUTPUT SYSTEMS \\IN SPACES OF CONTINUOUS FUNCTIONS}\\[4mm]

\textbf{Irina Blazhievska${}^1$ and Vladimir Zaiats${}^2$}\\[4mm]


\noindent ${}^1$ \small{Department of Mathematical Analysis and Probability Theory, \linebreak
NTUU ``Igor Sikorsky Kyiv Polytechnic Institute,'' Av. Peremogy, 37, 03056 Kyiv, Ukraine, \linebreak
ORCID: 	http://orcid.org/0000-0003-4518-5611, e-mail: i.blazhievska@gmail.com} \\[2mm]

\noindent \footnote[2]{Corresponding author.}
\small{Departament d\'{}Economia i d\'{}Hist\`{o}ria Econ\`{o}mica,
\linebreak Universitat Aut\`{o}noma de Barcelona, Edifici B, 08193 Bellaterra (Barcelona), Spain, \linebreak
ORCID: http://orcid.org/0000-0002-2932-4805, e-mail: vladimir.zaiats@gmail.com}
\end{center}
\begin{abstract}
In this paper, single input--double output (SIDO) linear time-invariant (LTI) systems are studied. Both components of system's impulse response function (IRF) are supposed to be real-valued and $L_{2}$-integrable. One component is unknown while the second one is controlled. The problem is to estimate the unknown component after observations of the other component. For this purpose, we apply cross-correlating of the outputs given that the input is a standard Wiener process on $\R$. Weak asymptotic normality of appropriately centred estimators in spaces of continuous functions is proved. This enables us to construct confidence intervals in these spaces. Our results employ techniques related to Gaussian processes and bilinear forms of Gaussian processes.

\end{abstract}
\noindent \textbf{Key words:} linear time-invariant system, impulse response function, cross-correlogram, square-Gaussian process, Prokhorov theorem, entropy method, comparison principle.

\doublespacing


\section{Introduction}
Identification and estimation of characteristics in stochastic linear systems are usually based on ``black-box'' models. An input may be single or multiple, perfect or noisy, as well as an output may be. The areas of application of these models are quite different, varying from civil engineering (Engberg and Larsen 1995, Chapter 9), (Ren, Zhao and Harik 2004), (Spiridonakos and Chatzi 2014), communications and networks (Demir and Sangiovanni-Vintcentelli 1998, Sections 2.3--2.5), signal and image processing (Camps-Valls, Rojo-\'Alvarez, and Mart\'{\i}nez-Ram\'on 2007), (Ogunfumni 2007, Chapters 3--5), (Gao et al. 2014), (Prabhu 2014, Chapters 7--9), system identification and control (S\"{o}derstr\"{o}m and Stoica 1988, Chapters 5--9), (Sj\"{o}berg et al. 1995), (Chen, Ohlsson and Ljung 2012), applications to biology (Rost, Geske and Baake 2006) and finance (Hatemi-J 2012, 2014). Either in parametric of non-parametric framework, the above mentioned models use the feature of any linear system to be uniquely identified by means of the impulse response function (IRF).

 Our paper deals with a single-input double-output (SIDO) channels model described by a linear time-invariant (LTI) system whose IRF has two real-valued components (kernels) $\{H, g\} \subset L_2(\R)$. System's input is supposed to be a standard Wiener process $W$ on $\R$, whereas the outputs are defined as follows:
 \begin{equation*}
     Y(t)=\Int_{-\infty}^{\infty}H(t-s)dW(s);\qquad X(t)=\Int_{-\infty}^{\infty}g(t-s)dW(s),\ \ t \in \R.
 \end{equation*}

In this paper, we assume that $H$ is unknown while $g$ is known (controlled), and our aim is to estimate~$H$ after observations of the outputs $X$ and $Y$. For a detailed survey of deterministic and statistical approaches to the mentioned problem including correlogram and periodogram methods, we refer the reader to (Blazhievska and Zaiats 2018).
Our main idea is based on the fact that $H$ can be obtained by cross-correlating the $X$ and $Y$ given that $g$ is ``close'' to Dirac's delta:
$$
\Cov(Y(t),X(0))= \Int_{-\infty}^{\infty}H(s)g(s-t)ds=\Big(H\ast g\Big)(t) \approx \Big(H\ast \delta\Big)(t)=H(t),\ \ t\in \R.
$$
This leads us to carrying out the following steps:
\begin{itemize}
\item to introduce the cross-correlogram between the outputs and to take it as an estimator of $H$;
\item to approximate the Dirac delta by a family of $\delta$-like $L_2$-integrable kernels;
\item to use asymptotic results about sample cross-corellograms of Gaussian processes;
\item to study estimator's properties in functional spaces (particularly, in a space of continuous functions).
\end{itemize}
Roughly speaking, $H$ is estimated by the empiric integral-type sample cross-correlogram
$$
\widehat{H}(\tau)=\frac{1}{T}\Int_{0}^{T}Y(t+\tau)X(t)dt,\quad \tau \in \R.
$$
The kernel $g$ is controlled in the following way: it involves a parameter $\Delta$, and we assume that $g$ tends to the Dirac delta as $\Delta\to\infty$. Since the output $X$ is expressed in terms of~$g$, the estimator $\widehat{H}$ depends on two parameters: the averaging length $T$ required to make $\widehat{H}$ asymptotically unbiased, and $\Delta$ whose role consists in scaling. We are interested in establishing a relation between $T$ and $\Delta$ which would provide asymptotic normality of $\widehat{H}$ in the Banach space of continuous functions; we would also like to construct confidence intervals for $H$ in this space. Our basic assumption $H\in L_2(\R)$ will be complemented with some extras related to sample behaviour of Gaussian processes and bilinear forms of Gaussian processes.

 A similar problem was considered in (Li 2005) where the centred estimator $\widehat{H}$ was proved to be asymptotically normal. A recent paper (Blazhievska and Zaiats 2018) has removed extra assumptions on~$H$ reducing them to the one and only condition of $H \in L_2(\R)$. An extension to SIDO-systems has been new. An important feature is that the $L_2$-integrability is, in certain sense, a necessary condition, admitting non-stable system's channels. Recall that stability of an LTI system is related to the fact that $H\in L_1(\R)$.
If $T$ were involved only, it would enable to apply asymptotic properties of Delta matroid integrals generated by functionals based on processes with independent increments, see (Anh, Leonenko, and Sakhno 2007) and (Avram, Leonenko, and Sakhno 2010, 2015).

Our study of weak convergence of a centred estimator $\widehat{H}$ in Banach spaces of continuous functions leads to a situation where two types of processes appear (Blazhievska and Zaiats 2018): a limiting non-stationary Gaussian process  and a pre-limiting process which is stationary and square-Gaussian, 
belonging to an Orlicz space. Almost sure (a.~s.) continuity is proved by means of the entropy approach related to these classes of processes, see (Dudley 1973), (Fernique 1975), (Lifshits 1995, Chapter 14), (Buldygin and Kozachenko 2000, Chapter~4) and (Kozachenko et al. 2017, Chapter~6), while the classical Prokhorov theorem is employed in proving a CLT. Our proof is based on comparison principles for Gaussian processes, similar to what was done in (Buldygin 1983) and extended in (Buldygin and Zaiats 1991, 1995) and (Buldygin and Kharazishvili 2000, Chapter~12). A recent paper (Nourdin, Peccati, and Viens 2014) may open new horizons since it departs from the Gaussian context.

Similar problems for single-input single-output (SISO) systems were considered in (Buldygin, Utzet and Zaiats 2004), (Buldygin and Blazhievska 2009), (Blazhievska 2011), (Kozachenko and Rozora 2015) and (Rozora 2018). For a survey of results on estimation of the correlation function in Gaussian processes which was the parent problem for our setting, see (Buldygin and Kozachenko 2000, Chapter~6).

 The following two circumstances motivate a special role of the functional space we choose:
 \begin{itemize}
   \item it is important to characterise (in terms of the covariance function) continuous Gaussian processes since these processes are mathematical models of numerous real-life phenomena;
   \item the space of real-valued continuous functions is universal in the class of all separable metric spaces.
 \end{itemize}

Assumptions providing that sample paths of different classes of stochastic processes are a.~s. continuous have widely been discussed in the literature. Gaussian processes constitute a theoretically important case. This is why we only state those results that are relevant in the settings. Our study deals with the entropy method. However, in the last twenty years, great progress has been made using majorising measures beginning with the celebrated Talagrand's paper; see (Lifshits 1995, Chapter 16) and (Ledoux and Talagrand 2013, Chapter 11). Note that the entropy method leads to sufficient conditions whereas majorising measures provide necessary and sufficient ones for Gaussian processes to be a.~s. continuous. 

 This paper is organised as follows: Section~2 contains definitions and preliminary information. Section~3 shows when a centred estimator becomes asymptotically Gaussian in spaces of continuous functions. Confidence intervals for the uniform norm of the pre-limiting and limiting processes are constructed in Section~4. Some illustrative examples are given in Section~5. Wiener shot noise processes appearing in our framework were simulated in Wolfram Mathematica$^\circledR$; they are represented on Figures 2--3. Section~6 contains concluding remarks.

The symbols $\Box$, $\triangleleft$ and $\Diamond$ are used to mark the end of a proof, a remark, or an example, respectively.


\section{Definitions and preliminaries}

\paragraph{Notations.} We introduce the following notations to be used throughout all paper:
\begin{itemize}
  \item $L_{p}(\R),$ $p \in [1,\infty),$ is the Banach space of Lebesgue $p$-integrable complex-valued functions $\phi$ on $\R=(-\infty, \infty)$ with the norm $\|\phi\|_{p}=\left(\int_{-\infty}^{\infty}|\phi(x)|^{p}\ dx\right)^{1/p}$;
  \item $L_{\infty}(\R)$ is the Banach space of complex-valued essentially bounded functions $\phi$ on $\R$ with the sup-norm $\|\phi\|_{\infty}=\Sup_{x \in \R}|\phi(x)|$;
  \item $\phi^*$ is the Fourier-Plancherel transform of a given function $\phi \in L_2(\R)$, that is
\begin{equation*}
\phi^{*}(\lambda)=\Int_{-\infty}^{\infty}e^{-i\lambda t}\phi(t)\, dt,\quad\lambda \in \R,
\end{equation*}
implying that $\phi^*\in L_2(\R)$ and $\|\phi^*\|_2=\sqrt{2\pi}\|\phi\|_2$ (e.g., (Kolmogorov and Fomin 1968, 439));
  \item $C[a,b]$,  $[a,b] \subset \R$, is the (separable) Banach space of real-valued continuous functions $\phi$ on $[a,b]$ with the sup-norm $\|\phi\|_{\infty}=\sup_{x \in [a,b]}|\phi(x)|$;
  \item $\textrm{Exp}_{(1)}(\Omega)$ is the Orlicz space of random variables generated by the $\mathbb{C}$-function $U(x)=\exp(|x|)-1$, and defined on a probability space $(\Omega, {\FF}, \Pb)$; (e.g., (Buldygin and Kozachenko 2000, Remark 2.3.1)).
\end{itemize}

\paragraph{Our model and main assumptions.} We consider the following SIDO LTI system with IRF having two real-valued components (or kernels):
\begin{center}
\begin{tikzpicture}[scale=0.5,>=stealth]
\draw[very thick] (-2,-1.5) rectangle (2,1.5);
\filldraw[fill=gray, draw=black, very thick](-2,0) rectangle (2,1.5);
\draw[very thick, ->] (-6,0) -- (-2,0) node[pos=0.5,above] {$W$};
\draw[very thick,black](0, 0.75) node {$H$};
\draw[very thick,black](0, -0.75) node {$g_{\Delta}$};
\draw[very thick, ->] (2,0.75) -- (6,0.75) node[pos=0.5,above] {$Y$};
\draw[very thick, ->] (2,-0.75) -- (6,-0.75) node[pos=0.5,above] {$X_{\Delta}$};
\end{tikzpicture}
\end{center}
where $H$ is an unknown function while $g_{\Delta}$ is a known function dependent on a parameter $\Delta>0$. In the above figure, we shadow the channel where the unknown IRF $H$ appears. Within the paper, we always suppose that the following assumptions hold:
 \begin{itemize}
  \item $H \in L_{2}(\R)$;
  \item the family $(g_{\Delta}, \Delta>0)$ satisfies:
  \begin{equation}\label{f1a}
    g_{\Delta}\in L_{2}(\R);\tag{1a}
    \end{equation}
 \begin{equation}\label{f1b}
     \forall{t\in \R}: g_{\Delta}(t)=g_{\Delta}(-t);\tag{1b}
\end{equation}
\begin{equation}\label{f1c}
     \sup\limits_{\Delta>0}\|{g^{*}_{\Delta}}\|_{\infty}<\infty;\tag{1c}
     \end{equation}
\begin{equation}\label{f1d}
     \exists{c \in (0, \infty)}\ \ \forall{a \in (0, \infty)}: \ \Lim_{\Delta\to\infty}\Sup_{-a\leq \lambda \leq a}\left|g^{*}_{\Delta}(\lambda)-c\right|=0. \tag{1d}
     \end{equation}
 \end{itemize}

 \begin{rem}\label{rem0} In engineering, the Fourier transform of an IRF is often called the frequency transfer function (FTF). Since the basic assumption is $H\in L_2(\R)$, the FTF $H^*$ should be interpreted in the Fourier-Plancherel sense in our framework. \hfill$\triangleleft$
 \end{rem}

 \begin{rem}\label{rem1} It is clear that assumptions (\ref{f1a})--(\ref{f1d}) imply that:
 \begin{itemize}
   \item $g^{*}_{\Delta}$ is a real-valued even function{\rm ;}
   \item if $g_{\Delta}$ is a non-negative function, then\quad $\sup\limits_{\Delta>0}\|{g_{\Delta}}\|_{1}<\infty;$
   \item if\quad $\sup\limits_{\Delta>0}\|{g_{\Delta}}\|_{1}<\infty$, then $g^{*}_{\Delta}$ is the usual Fourier transform of $g_{\Delta};$
   \item the family $(g_{\Delta}, \Delta>0)$ approximates, in a way, the Dirac delta.
   \end{itemize}
Summarising all these statements, the family $(g_{\Delta},\ \Delta>0)$ covers $\delta$-like families for scaled Fourier transforms of classic window functions (Prabhu 2014, Chapter 3). Observe also that the proposed type of $\Delta$-scales for the kernels $g_{\Delta}$ may lead us to continuous wavelets (Najmi 2012, Chapters 4--6). \hfill$\triangleleft$
 \end{rem}

\paragraph{Input and output processes.} The input $W=(W(t),t\in\bf{R})$ to the system is a standard Wiener process on $\bf{R}$. Both outputs are supposed to be observed:
\addtocounter{equation}1
\begin{equation}\label{f2}
   Y(t)=\Int_{-\infty}^{\infty}H(t-s)dW(s); \qquad X_{\Delta}(t)=\Int_{-\infty}^{\infty}g_{\Delta}(t-s)dW(s),\quad t\in\bf{R};
\end{equation}
the integrals in (\ref{f2}) are interpreted as mean-square Riemann integrals. Since $H$ and $g_\Delta$ are $L_2$-integrable and since our system is LTI, the outputs $Y$ and $X_\Delta$ are jointly Gaussian, stationary, zero-mean processes having spectral densities $\frac{1}{2\pi}|H^*(\cdot)|^2$ and $\frac{1}{2\pi}|g^*_{\Delta}(\cdot)|^2$ respectively (e.g., (Lindgren~2006, Theorem~4.8)).

 We also suppose that the process $Y$ and all processes $X_{\Delta}$, $\Delta >0,$ are separable and a.~s. sample continuous on $\R$. This assumption is natural since we deal with Gaussian stationary processes whose correlation functions are continuous; see Belyaev's alternative in (e.g., (Lifshits 1995, Theorem 7.3)).

\paragraph{Cross-correlogram estimators.} We use the following integral-type sample cross-correlogram as an estimator of $H$:
\begin{equation}\label{f3}
\widehat{H}_{T,\Delta}(\tau)=\frac{1}{cT}\Int_{0}^{T}Y(t+\tau)X_{\Delta}(t)dt,\quad \tau \in \bf{R}.
\end{equation}
Here, $c$ is the constant appearing in (\ref{f1d}), $T$ is the length of the interval where the averaging is performed. Since $X_{\Delta}$ and $Y$ are a.~s. sample continuous processes, the integral in (\ref{f3}) may be interpreted in the mean-square Riemann sense. For any $\tau \in \R$, it defines an a.~s. sample continuous function; one has \quad $H(\tau)\neq \E \widehat{H}_{T,\Delta}(\tau)=\frac{1}{c}\Int_{-\infty}^{\infty}g_{\Delta}(s)H(s+\tau)\,ds,$\quad
meaning that $\widehat{H}_{T,\Delta}(\tau)$ is biased as an estimator of $H(\tau)$.
\begin{rem}\label{rem2}
The function $(\mathbf{E} \widehat{H}_{T,\Delta}(\tau),\tau \in\R)$ is non-random and continuous on $\R$ as a normalised (by 1/c) joint covariance function of the processes $X_{\Delta}$ and $Y$, both of which are mean-square continuous. \hfill$\triangleleft$
\end{rem}
\noindent The fact that the estimator $\widehat{H}_{T,\Delta}$ is biased and that it depends on two parameters, $T$ and $\Delta$, enables us to study the role of these parameters in obtaining nice statistical properties of the estimator.

\paragraph{The object of study.} Given that $H\in L_2(\R)$ and given that assumptions (\ref{f1a})--(\ref{f1d}) hold, asymptotic properties of the estimator $\widehat{H}_{T,\Delta}$ are related to the behaviour, as $T\to \infty$ and $\Delta \to\infty$, of the process
 \begin{equation}\label{f4}
 \widehat{Z}_{T,\Delta}(\tau)=\sqrt{T}\Big[\widehat{H}_{T,\Delta}(\tau)-\E \widehat{H}_{T,\Delta}(\tau)\Big],\quad \tau \in \R.
\end{equation}

\begin{rem}\label{rem3}
The process $(\widehat{Z}_{T,\Delta}(\tau), \tau \in \R)$ may be interpreted as a normalised (by $1/\sqrt{T}$) mean-square limit of integral Riemann sums of the type
$$
\Sum_{k}\Big(Y(t_k+\tau)X_{\Delta}(t_k)- \E Y(t_k+\tau)X_{\Delta}(t_k)\Big) \Delta t_k.
$$
The latter allows us to claim that $\widehat{Z}_{T,\Delta}$ is a centred square-Gaussian process and hence it is an $\textrm{Exp}_{(1)}$-process. For further details on Orlicz spaces and pre-ordering of embedded norms, see (Buldygin and Kozachenko 2000, Chapters 1, 4--6). \hfill$\triangleleft$
\end{rem}

\paragraph{Preliminary information.} We give a brief account of the facts we need for our future purposes.
It is clear that process (\ref{f4}) has zero mean; for any $\tau_{1},\tau_{2}\in \R$, the covariance function of $\widehat{Z}_{T,\Delta}$ is as follows:
 \begin{eqnarray}
  \E \widehat{Z}_{T,\Delta}(\tau_{1})\widehat{Z}_{T,\Delta}(\tau_{2}) &=&
   \frac{1}{2\pi c^2}\Int_{-\infty}^{\infty}\Int_{-\infty}^{\infty} \Big[e^{i(\tau_{1}-\tau_{2})\lambda_{1}}|H^{*}(\lambda_{1})|^{2}|g^{*}_{\Delta}(\lambda_{2})|^{2}+ \label{f5} \\[2mm]
  && +e^{i(\tau_{1}\lambda_{1}+\tau_{2}\lambda_{2})}H^{*}(\lambda_{1})H^{*}(\lambda_{2})\overline{g^{*}_{\Delta}(\lambda_{1})g^{*}_{\Delta}(\lambda_{2})}\Big]
       \Phi_{T}(\lambda_{1}-\lambda_{2})\,d\lambda_{1}d\lambda_{2}, \nonumber
 \end{eqnarray}
where $\Phi_{T}$ denotes the well-known Fej\'{e}r kernel, that is,
 $$
 \Phi_{T}(\lambda)=\frac{1}{2\pi T}\left(\frac{\sin(T\lambda /2)}{\lambda /2}\right)^{2},\ \lambda \in \bf{R}.
 $$
In particular, for any  $\tau_{1},\,\tau_{2}\in \R,$ the limit of (\ref{f5}) taken as $T \to \infty$ and $\Delta \to \infty,$ has the form
 \begin{equation}\label{f6}
\lim_{^{T \to \infty}_{\Delta \to \infty}}\E \widehat{Z}_{T,\Delta}(\tau_{1})\widehat{Z}_{T,\Delta}(\tau_{2})
=\frac{1}{2\pi}\Int_{-\infty}^{\infty}\left[e^{i(\tau_{1}-\tau_{2})\lambda}|H^{*}(\lambda)|^{2}+
e^{i(\tau_{1}+\tau_{2})\lambda}(H^{*}(\lambda))^{2}\right]\,d\lambda.
\end{equation}
The function on the right-hand side of (\ref{f6}) will be denoted by $C_\infty(\tau_1,\tau_2)$, $\tau_1,\tau_2\in\R$. Let $Z(\tau),$ $\tau \in \R$, be a measurable separable real-valued Gaussian process with zero mean whose covariance function is $C_{\infty}$:
\begin{equation}\label{f7}
\E Z(\tau_{1})Z(\tau_{2})=C_{\infty}(\tau_{1},\tau_{2}), \quad \tau_1,\tau_2\in\R.
\end{equation}
All finite-dimensional distributions of the process $\widehat{Z}_{T,\Delta}$ converge, as $T\to\infty$ and $\Delta\to\infty$, to those of the process $Z$ (Blazhievska and Zaiats 2018, Theorem 3.2). It is clear that $Z$ is non-stationary.

\paragraph{Further steps to be made.} In this paper, we prove that $\widehat{Z}_{T,\Delta}$ is asymptotically normal in $\Ceabe$. This question is raised in a quite natural way since, for any $T>0$ and $\Delta>0$, the process $\widehat{Z}_{T,\Delta}$ is a.~s. sample continuous on $\R$. Indeed, the process $\widehat{H}_{T,\Delta}$ is a.~s. sample continuous on $\R$, and Remark \ref{rem2} holds. Since $\widehat{Z}_{T,\Delta}$ converges weakly to $Z$ in $\Ceabe$, we can obtain information on asymptotic behaviour of the uniform deviation of the estimator $\widehat{H}_{T,\Delta}(\tau)$ from its mean $\E\widehat{H}_{T,\Delta}(\tau)$ when $\tau$ runs through $[a,b]$. Moreover, since the mean-square error of the (non-stationary) process $Z$ is majorised by that of $Y$ which is stationary, Gaussian comparison inequalities can be applied to these Gaussian processes in order to find simple bounds on the tails of the uniform norm of $Z$.


\section{Asymptotic normality of the process $\widehat{Z}_{T,\Delta}$ in $\Ceabe$}

We recall some useful tools related to Gaussian stochastic processes (Buldygin and Kozachenko 2000, Chapter 4). Let $S$ be a parametric set. A function $\rho(t,s),\, t,s \in S,$ is called a pseudometric on $S$ if it satisfies all axioms of a metric, with the exception that the set $\{(t,s) \in S\times S:\ \rho(t,s)=0\}$ may be wider than the diagonal $\{(t,s) \in S\times S:\ t=s\}$. We write $\NN_{\rho}(S,\varepsilon)$ for the minimum of the number of closed $\rho$-balls of radius $\varepsilon>0$ whose centres lie in S and which cover $S$ ($\varepsilon$-covering). If there is no finite $\varepsilon$-covering of $S$, then we write $\NN_{\rho}(S,\varepsilon)=\infty$. The function ${\NN}_{\rho}(S,\varepsilon), \varepsilon>0,$ is called the metric massiveness ($\varepsilon$-covering number) of $S$ with respect to $\rho$. Further, let $\HH _{\rho}(S,\varepsilon)=\log \NN_{\rho}(S,\varepsilon)$ be a standard notation for the metric entropy of $S$ with respect to $\rho$. For any $\beta>0$, the inequality $\int_{0+}\HH_{\rho}^{\beta}(S, \varepsilon)\dd\varepsilon<\infty$ is always interpreted in the sense that for some (and hence for all) $u>0$ we have $\int_{0}^{u}\HH_{\rho}^{\beta}(S, \varepsilon)\dd\varepsilon<\infty$.
Consider the function
$$
\sigma(\tau)=\left[\int\limits_{-\infty}^{\infty}|H^{*}(\lambda)|^{2}\sin^{2}(\tau\lambda/2)d\lambda\right]^{1/2}, \quad \tau\in \R.
$$
Since $H \in L_{2}(\bf{R})$, it is well-defined and generates the following two pseudometrics: $\sigma(\tau_{1},\tau_{2})=\sigma(|\tau_{1}-\tau_{2}|)$ and $\sqrt{\sigma}(\tau_{1},\tau_{2})=\sqrt{\sigma(\tau_{1},\tau_{2})},\ \tau_{1},\tau_{2} \in \R$. Note that if $H^{*}(\lambda)\neq 0$ for $\lambda$ belonging to a set of a positive Lebesgue measure, then $\sigma$ and $\sqrt{\sigma}$ are metrics.
For all $\varepsilon>0$, put ${\HH}_{\sigma}(\varepsilon)= {\HH}_{\sigma}([0,1],\varepsilon),\,{\HH}_{\sqrt{\sigma}}(\varepsilon)={\HH}_{\sqrt{\sigma}}([0,1],\varepsilon)$. The pseudometrics $\sigma$ and $\sqrt{\sigma}$ depend on $|\tau_{1}-\tau_{2}|$ only. Then for any $[a,b]\subset\R$ and $\beta>0$
 $$
   \int_{0+}{\HH}_{\sigma}^{\beta}(\varepsilon)d\varepsilon<\infty \Longleftrightarrow \int_{0+}{\HH}_{\sigma}^{\beta}([a,b],\varepsilon)d\varepsilon<\infty;\qquad
   \int_{0+}{\HH}_{\sqrt{\sigma}}(\varepsilon)d\varepsilon<\infty \Longleftrightarrow \int_{0+}{\HH}_{\sqrt{\sigma}}([a,b],\varepsilon)d\varepsilon<\infty.
 $$
\begin{rem}\label{rem_SigmaY}
Our choice of the function $\sigma$ is not accidental and is motivated by the following fact:
\begin{equation}\label{f70}
 \E |Y(\tau_2)-Y(\tau_1)|^2=\frac{2}{\pi}\Int_{-\infty}^{\infty}|H^*(\lambda)|^2\sin^2\frac{(\tau_2-\tau_1)\lambda}{2}d\lambda=\frac{2}{\pi}\:\sigma^2(\tau_1,\tau_2), \quad \tau_1, \tau_2 \in \R.
\end{equation}
In other words, the pseudometrics $\sigma$ and $\sqrt{\sigma}$ are related to the mean-square error of the stationary Gaussian output $Y$. \hfill$\triangleleft$
\end{rem}
The next theorem gives sufficient conditions for $\widehat{Z}_{T,\Delta}$ and $Z$ to be a.~s. continuous and shows when $\widehat{Z}_{T,\Delta}$ converges weakly to $Z$ in $\Ceabe$ as $T\to \infty$ and $\Delta \to \infty$. This weak convergence will be denoted by $\widehat{Z}_{T,\Delta}\overset{C[a,b]}\Longrightarrow Z$.
In the sequel, we use the notation $d(\tau_1,\tau_2)=|\tau_1-\tau_2|,$ $\tau_1$, $\tau_2\in \R,$ for the uniform metric.
\begin{thm}\label{thm1}
For any $[a,b]\subset \R$, assume that the following inequality holds\/{\rm :}
\begin{equation}\label{f8}
\int_{0+}\HH_{\sqrt{\sigma}}(\varepsilon)\dd\varepsilon<\infty.
\end{equation}
Then we have{\rm :} \quad {\rm (I)} $Z \in C[a,b]$ a.~s.{\rm ;}\quad {\rm (II)} $\widehat{Z}_{T,\Delta} \in C[a,b]$ a.~s. for any fixed $T>0$ and $\Delta>0;$\quad
{\rm (III)} $\widehat{Z}_{T,\Delta}\overset{C[a,b]}\Longrightarrow Z$ as $T \to \infty$ and $\Delta \to \infty.$
In particular, for any $x>0$
 $$
  \lim_{^{T \to \infty}_{\Delta \to \infty}}{\emph \Pb}\Big\{\sup\limits_{\tau \in [a,b]}\left|\widehat{Z}_{T,\Delta}(\tau)\right|>x\Big\}={\emph \Pb}\Big\{\sup\limits_{\tau \in [a,b]}\left|Z(\tau)\right|>x\Big\}.
 $$
\end{thm}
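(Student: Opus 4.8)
The plan is to obtain the sample‑continuity assertions (I) and (II) by the entropy method---for Gaussian and for square‑Gaussian ($\textrm{Exp}_{(1)}$) processes, respectively---and to deduce the weak convergence (III) from Prokhorov's theorem, the convergence of finite‑dimensional distributions being already at hand. The whole argument hinges on a single analytic estimate: on any $[a,b]$ there is a constant $K$, independent of $T$ and $\Delta$, with
$$\E\bigl|\widehat{Z}_{T,\Delta}(\tau_{2})-\widehat{Z}_{T,\Delta}(\tau_{1})\bigr|^{2}\le K\,\sigma(\tau_{1},\tau_{2}),\qquad \tau_{1},\tau_{2}\in[a,b],$$
together with the observation that (\ref{f8}) is precisely the entropy hypothesis this estimate requires. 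Indeed, since $\sqrt{\sigma}$ depends on $|\tau_{1}-\tau_{2}|$ only, a $\sqrt{\sigma}$‑ball of radius $\varepsilon$ is a $\sigma$‑ball of radius $\varepsilon^{2}$, so $\HH_{\sqrt{\sigma}}(\varepsilon)=\HH_{\sigma}(\varepsilon^{2})$ and $\int_{0}^{u}\HH_{\sqrt{\sigma}}(\varepsilon)\,d\varepsilon=\tfrac{1}{2}\int_{0}^{u^{2}}\HH_{\sigma}(v)\,v^{-1/2}\,dv$; finiteness of the last integral entails both $\int_{0+}\HH_{\sigma}(v)\,dv<\infty$ (since $v^{-1/2}\ge 1$ near the origin) and, by the Cauchy--Schwarz inequality, $\int_{0+}\HH_{\sigma}^{1/2}(v)\,dv<\infty$.

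For (I), $Z$ is the centred Gaussian process with covariance $C_{\infty}$ of (\ref{f6})--(\ref{f7}), and a short computation gives
$$\E|Z(\tau_{2})-Z(\tau_{1})|^{2}=\frac{2}{\pi}\,\sigma^{2}(\tau_{1},\tau_{2})+\frac{1}{2\pi}\Int_{-\infty}^{\infty}\bigl(H^{*}(\lambda)\bigr)^{2}\bigl(e^{i\tau_{1}\lambda}-e^{i\tau_{2}\lambda}\bigr)^{2}\,d\lambda.$$
Since $|e^{i\tau_{1}\lambda}-e^{i\tau_{2}\lambda}|^{2}=4\sin^{2}\tfrac{(\tau_{1}-\tau_{2})\lambda}{2}$, the second summand is dominated in modulus by $\tfrac{2}{\pi}\sigma^{2}(\tau_{1},\tau_{2})$, so the canonical pseudometric $\rho_{Z}$ of $Z$ satisfies $\rho_{Z}\le c_{0}\sigma$ for a suitable $c_{0}$ (this is the precise form of the remark that the mean‑square error of $Z$ is majorised by that of $Y$; cf.\ (\ref{f70})). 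Hence $\int_{0+}\HH_{\rho_{Z}}^{1/2}([a,b],\varepsilon)\,d\varepsilon\le\int_{0+}\HH_{\sigma}^{1/2}([a,b],\varepsilon/c_{0})\,d\varepsilon<\infty$ by the first paragraph (and because, as recalled before the theorem, the entropy integrals over $[a,b]$ and over $[0,1]$ are equivalent), and the Dudley--Fernique sufficient condition for sample continuity of Gaussian processes gives $Z\in C[a,b]$ a.s.

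For (II) and (III), one first proves the increment bound from the covariance formula (\ref{f5}). Writing $\E|\widehat{Z}_{T,\Delta}(\tau_{2})-\widehat{Z}_{T,\Delta}(\tau_{1})|^{2}$ as the matching combination of the two summands of (\ref{f5}), the ``diagonal'' part (coming from the $|H^{*}(\lambda_{1})|^{2}|g^{*}_{\Delta}(\lambda_{2})|^{2}$ term) equals
$$\frac{2}{\pi c^{2}}\Int_{-\infty}^{\infty}\Int_{-\infty}^{\infty}\sin^{2}\tfrac{(\tau_{1}-\tau_{2})\lambda_{1}}{2}\,|H^{*}(\lambda_{1})|^{2}\,|g^{*}_{\Delta}(\lambda_{2})|^{2}\,\Phi_{T}(\lambda_{1}-\lambda_{2})\,d\lambda_{1}d\lambda_{2},$$
which, by $\int_{\R}\Phi_{T}=1$ and (\ref{f1c}), does not exceed $\tfrac{2}{\pi c^{2}}\bigl(\sup_{\Delta>0}\|g^{*}_{\Delta}\|_{\infty}\bigr)^{2}\sigma^{2}(\tau_{1},\tau_{2})$, while the ``off‑diagonal'' part, after bounding the exponential factor by $2\bigl|\sin\tfrac{(\tau_{1}-\tau_{2})\lambda_{1}}{2}\bigr|+2\bigl|\sin\tfrac{(\tau_{1}-\tau_{2})\lambda_{2}}{2}\bigr|$ and applying Cauchy--Schwarz against the probability measure $\Phi_{T}(\lambda_{1}-\lambda_{2})\,d\lambda_{1}d\lambda_{2}$, is bounded in modulus by a constant multiple of $\bigl(\sup_{\Delta>0}\|g^{*}_{\Delta}\|_{\infty}\bigr)^{2}\|H\|_{2}\,\sigma(\tau_{1},\tau_{2})$; since $\sigma^{2}\le\bigl(\sup_{[a,b]}\sigma\bigr)\sigma$ on $[a,b]$, both pieces are $\le K\sigma(\tau_{1},\tau_{2})$ with $K$ free of $T$ and $\Delta$. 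By Remark~\ref{rem3} the process $\widehat{Z}_{T,\Delta}$ is centred square‑Gaussian, so its $\textrm{Exp}_{(1)}$‑norm is dominated by an absolute constant times its $L_{2}$‑norm; therefore its $\textrm{Exp}_{(1)}$‑pseudometric $\rho_{T,\Delta}$ obeys $\rho_{T,\Delta}\le K'\sqrt{\sigma}$ uniformly in $T,\Delta$, whence $\HH_{\rho_{T,\Delta}}([a,b],\varepsilon)\le\HH_{\sqrt{\sigma}}([a,b],\varepsilon/K')$. The entropy sufficient condition for $\textrm{Exp}_{(1)}$‑processes, which asks precisely that $\int_{0+}\HH_{\rho_{T,\Delta}}(\varepsilon)\,d\varepsilon<\infty$, then yields (II) for each fixed $T$ and $\Delta$ and, more, a uniform modulus‑of‑continuity estimate
$$\sup_{T,\Delta}\ \E\sup_{\substack{\tau_{1},\tau_{2}\in[a,b]\\ |\tau_{1}-\tau_{2}|\le h}}\bigl|\widehat{Z}_{T,\Delta}(\tau_{1})-\widehat{Z}_{T,\Delta}(\tau_{2})\bigr|\le\Psi(h),\qquad \Psi(h)\to0\ \text{ as }\ h\to0,$$
where $\Psi$ is governed by the tail of the entropy integral in (\ref{f8}) (the passage from the $\sqrt{\sigma}$‑modulus to the $|\cdot|$‑modulus being legitimate since $\sqrt{\sigma}(\tau_{1},\tau_{2})\le\bigl(\sup_{|u|\le h}\sigma(u)\bigr)^{1/2}\to0$ as $h\to0$). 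Now (III) follows from Prokhorov's theorem: the finite‑dimensional distributions of $\widehat{Z}_{T,\Delta}$ converge to those of $Z$ (Blazhievska and Zaiats 2018, Theorem~3.2), which in particular makes $\{\widehat{Z}_{T,\Delta}(\tau_{0})\}$ tight for a fixed $\tau_{0}$, while the displayed modulus estimate, via Markov's inequality, supplies the Arzel\`{a}--Ascoli tightness condition in $C[a,b]$; hence $\widehat{Z}_{T,\Delta}\overset{C[a,b]}\Longrightarrow Z$. Finally, $\phi\mapsto\|\phi\|_{\infty}$ is a continuous functional on $C[a,b]$ and the distribution function of $\sup_{\tau\in[a,b]}|Z(\tau)|$ has no atom at any $x>0$ (a standard property of suprema of Gaussian processes), so the continuous mapping theorem gives $\lim\Pb\{\sup_{\tau\in[a,b]}|\widehat{Z}_{T,\Delta}(\tau)|>x\}=\Pb\{\sup_{\tau\in[a,b]}|Z(\tau)|>x\}$ for every $x>0$.

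The step I expect to be the main obstacle is the uniform‑in‑$(T,\Delta)$ increment bound (and the uniform modulus estimate it entails): one must control the oscillatory double integral (\ref{f5}) against the Fej\'{e}r kernel simultaneously in $T$---which is possible only because $\Phi_{T}$ is a probability density, $\int_{\R}\Phi_{T}=1$, so that the averaging length drops out---and in $\Delta$, where the sole tool is the uniform boundedness (\ref{f1c}) of $\|g^{*}_{\Delta}\|_{\infty}$, the remaining hypotheses (\ref{f1a}), (\ref{f1b}) and (\ref{f1d}) on the family $(g_{\Delta})$ not intervening here. Once this estimate is secured, the rest is a routine application of the entropy machinery for Gaussian and $\textrm{Exp}_{(1)}$ processes (Dudley 1973; Fernique 1975; Buldygin and Kozachenko 2000, Chapter~4; Kozachenko et al.\ 2017, Chapter~6) and of the Prokhorov theorem.
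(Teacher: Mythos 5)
Your proposal is correct and follows essentially the same route as the paper: the uniform-in-$(T,\Delta)$ increment bound $\rho_{(T,\Delta)}\le K'\sqrt{\sigma}$ obtained from (\ref{f5}) via $\|\Phi_T\|_1=1$, (\ref{f1c}) and Cauchy--Schwarz is exactly the paper's Lemma \ref{lem1}, Dudley's condition with $d_Z\le\tfrac{2}{\sqrt{\pi}}\sigma$ gives (I), the entropy criterion for square-Gaussian ($\textrm{Exp}_{(1)}$) processes gives (II), and uniform equicontinuity plus the known convergence of finite-dimensional distributions plus Prokhorov gives (III). The only cosmetic differences are that you pass from (\ref{f8}) to $\int_{0+}\HH_{\sigma}^{1/2}<\infty$ by a change of variables rather than via (\ref{f10}), you state the modulus bound in expectation where the paper uses the probability form of Lemma \ref{lemBK_2000}, and your remark that $\Phi_T(\lambda_1-\lambda_2)\,d\lambda_1 d\lambda_2$ is a probability measure on $\R^2$ is a harmless misnomer (it has infinite total mass; the Cauchy--Schwarz step works anyway, as in the paper).
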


\begin{rem}\label{rem4}
Statement {\rm (I)} in Theorem \ref{thm1} holds under a weaker assumption than (\ref{f8}), namely,
\begin{equation}\label{f9}
\int_{0+}{\HH}_{\sigma}^{1/2}(\varepsilon)d\varepsilon<\infty.
\end{equation}
Observe (e.g.,(Lifshits 1995, 229)) that if the integral Hunt condition holds for some $\beta>0$:
$$
\int\limits_{-\infty}^{\infty}|H^{*}(\lambda)|^{2}\log^{1+\beta}(1+|\lambda|)\,d\lambda<\infty,
$$
then this condition is sufficient for $Z$ to be a.~s. continuous. \hfill$\triangleleft$
\end{rem}

\begin{rem}\label{rem_DFY}
By equality (\ref{f70}) and by the Fernique theorem (Fernique 1975), the fact that $Y$, being a separable zero-mean stationary Gaussian process on $\R$, is a.~s. continuous implies assumption (\ref{f9}). \hfill$\triangleleft$
\end{rem}

Remarks \ref{rem4}--\ref{rem_DFY} make it clear that if $Y$ is a.~s. continuous, then the limit $Z$ is also a.~s. continuous.

\begin{rem}\label{rem5}
Assumption (\ref{f8}) is satisfied (Buldygin, Utzet, and Zaiats 2004) if the following condition holds for some $\beta>0$:
$$
\int\limits_{-\infty}^{\infty}|H^{*}(\lambda)|^{2}\log^{4+\beta}(1+|\lambda|)\,d\lambda<\infty.
$$
\end{rem}
\vspace{-1.5\baselineskip}
\hfill$\triangleleft$

\vspace{.5\baselineskip}
The proof of Theorem \ref{thm1} requires auxiliary results. First of all, consider the link between the pseudometrics $\sigma$ and $\sqrt{\sigma}$. Since, for all $\tau_{1},\tau_{2}\in \R$,
\begin{equation}\label{f10}
\sigma(\tau_{1},\tau_{2})\leq\left[\max\limits_{\tau_{1},\tau_{2}\in \R}\sigma(\tau_{1},\tau_{2})\right]^{1/2}\sqrt{\sigma}(\tau_{1},\tau_{2})\leq\|H^{*}\|_{2}^{1/2}\sqrt{\sigma}(\tau_{1},\tau_{2}),
\end{equation}
condition (\ref{f8}) yields $\int_{0+}{\HH}_{\sigma}(\varepsilon)d\varepsilon<\infty$ implying (\ref{f9}).
For all $T>0,\,\Delta>0,$ we introduce a new family of pseudometrics
 \begin{equation}\label{F:rho_TDelta}
\rho_{(T,\Delta)}(\tau_{1},\tau_{2})=\left(\E|\widehat{Z}_{T,\Delta}(\tau_{2})-\widehat{Z}_{T,\Delta}(\tau_{1})|^{2}\right)^{1/2},\quad \tau_{1},\tau_{2} \in \R.
\end{equation}
\begin{lem}\label{lem1}
For any $T>0,$ $\Delta>0,$ and any $\tau_{1},\tau_{2}\in \R,$ the following inequality holds\/{\rm :}
\begin{equation}\label{f11}
\rho_{(T,\Delta)}(\tau_{1},\tau_{2}) \leq \frac{1}{c}\: \left(\frac{2}{\pi}\:\|H^{*}\|_{2}\right)^{1/2} \Sup_{\Delta>0}\|g^*_{\Delta}\|_{\infty}\cdot\sqrt{\sigma}(\tau_{1},\tau_{2}).
\end{equation}
Moreover, the pseudometric $\rho_{(T,\Delta)}$ is continuous with respect to the pseudometic $\sigma$.
\end{lem}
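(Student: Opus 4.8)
The plan is to read off $\rho_{(T,\Delta)}^2(\tau_1,\tau_2)$ from the explicit covariance formula (\ref{f5}). Writing $\rho_{(T,\Delta)}^2(\tau_1,\tau_2)=\E\big(\widehat{Z}_{T,\Delta}(\tau_1)\big)^2-2\,\E\widehat{Z}_{T,\Delta}(\tau_1)\widehat{Z}_{T,\Delta}(\tau_2)+\E\big(\widehat{Z}_{T,\Delta}(\tau_2)\big)^2$ and substituting (\ref{f5}) three times, I would split the outcome into the contribution of the ``diagonal'' kernel $|H^*(\lambda_1)|^2|g^*_\Delta(\lambda_2)|^2$ and that of the ``cross'' kernel $H^*(\lambda_1)H^*(\lambda_2)\overline{g^*_\Delta(\lambda_1)g^*_\Delta(\lambda_2)}$, estimate each by a multiple of $\sigma^2(\tau_1,\tau_2)$, and finally turn the $\sigma$-bound into the $\sqrt{\sigma}$-bound (\ref{f11}) by means of (\ref{f10}). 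The tools I would use repeatedly are: $\Phi_T\ge 0$ with $\Int_{-\infty}^{\infty}\Phi_T(\lambda)\,d\lambda=1$; the fact that $g^*_\Delta$ is real-valued and even (Remark~\ref{rem1}); the uniform bound (\ref{f1c}); and $H^*,g^*_\Delta\in L_2(\R)$, which together with the Cauchy--Schwarz inequality makes all the double integrals absolutely convergent, so that the splitting and the interchanges of order of integration below are legitimate.

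For the diagonal part, the three substitutions produce the factor $1-2e^{i(\tau_1-\tau_2)\lambda_1}+1$; since $\rho_{(T,\Delta)}^2$ is real, it may be replaced by its real part $2-2\cos((\tau_1-\tau_2)\lambda_1)=4\sin^2((\tau_1-\tau_2)\lambda_1/2)$. Bounding $|g^*_\Delta(\lambda_2)|^2\le\big(\Sup_{\Delta>0}\|g^*_\Delta\|_\infty\big)^2$ and integrating out $\lambda_2$ against $\Phi_T(\lambda_1-\lambda_2)$ (which contributes a factor $1$) leaves $\frac{2}{\pi c^2}\big(\Sup_{\Delta>0}\|g^*_\Delta\|_\infty\big)^2\sigma^2(\tau_1,\tau_2)$ as an upper bound for this part.

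For the cross part, I would first symmetrise the double integral in $\lambda_1\leftrightarrow\lambda_2$ — this is permitted because the kernel $H^*(\lambda_1)H^*(\lambda_2)\overline{g^*_\Delta(\lambda_1)g^*_\Delta(\lambda_2)}\Phi_T(\lambda_1-\lambda_2)$ is symmetric, $\Phi_T$ being even — after which the exponential combination $e^{i\tau_1(\lambda_1+\lambda_2)}-e^{i(\tau_1\lambda_1+\tau_2\lambda_2)}-e^{i(\tau_2\lambda_1+\tau_1\lambda_2)}+e^{i\tau_2(\lambda_1+\lambda_2)}$ factors as $(e^{i\tau_1\lambda_1}-e^{i\tau_2\lambda_1})(e^{i\tau_1\lambda_2}-e^{i\tau_2\lambda_2})$, each factor having modulus $2|\sin((\tau_1-\tau_2)\lambda_j/2)|$. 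Bounding $|g^*_\Delta(\lambda_j)|$ by $\Sup_{\Delta>0}\|g^*_\Delta\|_\infty$, applying $2\alpha\beta\le\alpha^2+\beta^2$ to the two sine factors and integrating out the remaining frequency against $\Phi_T$, I would bound the cross part by $\frac{2}{\pi c^2}\big(\Sup_{\Delta>0}\|g^*_\Delta\|_\infty\big)^2\sigma^2(\tau_1,\tau_2)$ as well. Adding the two estimates gives $\rho_{(T,\Delta)}^2(\tau_1,\tau_2)\le\frac{4}{\pi c^2}\big(\Sup_{\Delta>0}\|g^*_\Delta\|_\infty\big)^2\sigma^2(\tau_1,\tau_2)$, which is already a bound of the shape (\ref{f11}); recovering the precise constant $\tfrac{2}{\pi}$ there would require a somewhat sharper treatment of the cross term. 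Since $\sigma^2(\tau_1,\tau_2)=\sigma(\tau_1,\tau_2)\cdot\sigma(\tau_1,\tau_2)\le\|H^*\|_2\,\sigma(\tau_1,\tau_2)=\|H^*\|_2\,\big(\sqrt{\sigma}(\tau_1,\tau_2)\big)^2$ — which is exactly what (\ref{f10}) expresses — the $\sigma$-bound becomes the $\sqrt{\sigma}$-bound (\ref{f11}). Finally, the intermediate Lipschitz estimate $\rho_{(T,\Delta)}(\tau_1,\tau_2)\le\tfrac{2}{c\sqrt{\pi}}\Sup_{\Delta>0}\|g^*_\Delta\|_\infty\cdot\sigma(\tau_1,\tau_2)$ shows that $\sigma(\tau_n,\tau_0)\to 0$ forces $\rho_{(T,\Delta)}(\tau_n,\tau_0)\to 0$, i.e. $\rho_{(T,\Delta)}$ is continuous with respect to $\sigma$.

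I expect the cross term to be the main obstacle: verifying that after symmetrisation the exponentials genuinely factor into the stated product, estimating its modulus so that the bound stays proportional to $\sigma^2(\tau_1,\tau_2)$ (and, should one insist on the exact constant of (\ref{f11}), keeping track of its sign, for instance via Plancherel), and justifying the manipulations involving the Fej\'er kernel — all of which, however, rest only on $H^*,g^*_\Delta\in L_2(\R)$, Young's convolution inequality and Cauchy--Schwarz.
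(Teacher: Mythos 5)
Your proposal is correct in substance and shares the paper's skeleton: expand $\rho_{(T,\Delta)}^2$ from (\ref{f5}), split into the ``diagonal'' and ``cross'' kernels, use $\|\Phi_T\|_1=1$ together with the uniform bound (\ref{f1c}), and finally pass from a bound in $\sigma$ to one in $\sqrt{\sigma}$ via (\ref{f10}). Where you genuinely diverge is the cross term: the paper keeps only one sine factor there, absorbs the other frequency into the convolution $|H^*|\ast\Phi_T$, and then applies Cauchy--Schwarz and Young to land directly on $\sigma(\tau_1,\tau_2)\,\|H^*\|_2$; you instead symmetrise in $\lambda_1\leftrightarrow\lambda_2$ (legitimate, since $\Phi_T$ is even), factor the exponentials into $(e^{i\tau_1\lambda_1}-e^{i\tau_2\lambda_1})(e^{i\tau_1\lambda_2}-e^{i\tau_2\lambda_2})$, and use $2\alpha\beta\le\alpha^2+\beta^2$ to stay at the level of $\sigma^2$, converting to $\|H^*\|_2\,\sigma$ only at the end. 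Both routes are valid and both give an inequality of the required form, uniform in $T$ and $\Delta$; your constant is $\frac1c\big(\frac4\pi\|H^*\|_2\big)^{1/2}\sup_{\Delta>0}\|g^*_\Delta\|_\infty$, a factor $\sqrt2$ larger than the one displayed in (\ref{f11}), as you acknowledge. You need not feel you are missing a sharper trick: the paper's own first display drops a factor $2$ on the diagonal term (from $2-2\cos\theta=4\sin^2(\theta/2)$ the prefactor should be $\tfrac{2}{\pi c^2}$, not $\tfrac{1}{\pi c^2}$), and rerunning the paper's estimate with that factor restored also yields $\tfrac4\pi$; in any case only the shape $\rho_{(T,\Delta)}\le \mathrm{const}\cdot\sqrt{\sigma}$ is used later (for (\ref{f13}), (\ref{f16}) and, up to an unimportant constant, Remark \ref{remZTD2}), so the discrepancy is harmless. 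A small bonus of your route is that the intermediate Lipschitz bound $\rho_{(T,\Delta)}\le \tfrac{2}{c\sqrt{\pi}}\sup_{\Delta>0}\|g^*_\Delta\|_\infty\cdot\sigma$ gives the continuity of $\rho_{(T,\Delta)}$ with respect to $\sigma$ immediately, whereas the paper invokes dominated convergence for this point.
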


\begin{proof}
Apply the Cauchy-Schwarz inequality, the Young inequality for convolutions (e.g., (Edwards 1965, 655)), and the fact that $\|\Phi_{T}\|_{1}=1$, to (\ref{f5}). Then
 \begin{eqnarray*}
  \lefteqn{\rho_{(T,\Delta)}^{2}(\tau_{1},\tau_{2})=\frac{1}{\pi c^2}\Bigg|\Int_{-\infty}^{\infty}\Int_{-\infty}^{\infty}|H^{*}(\lambda_{1})|^{2}|g^*_\Delta(\lambda_2)|^2\Phi_{T}(\lambda_{1}-\lambda_{2})\cdot
   \Big[\sin\frac{(\tau_{1}-\tau_{2})\lambda_{1}}{2}\Big]^{2}d\lambda_{1} d\lambda_{2}+}\\[2mm]
 && +\frac{1}{2}\Int_{-\infty}^{\infty}\Int_{-\infty}^{\infty} e^{i(\tau_{1}\lambda_{1}+\tau_{2}\lambda_{2})} \Big[e^{i(\lambda_{1}(\tau_{2}-\tau_{1})}-2+e^{i\lambda_{2}(\tau_{1}-\tau_{2})}\Big]\times\\[2mm]
 && \times H^{*}(\lambda_{1})H^{*}(\lambda_{2}) \overline{g^*_\Delta(\lambda_1)g^*_\Delta(\lambda_2)}\Phi_{T}(\lambda_{1}-\lambda_{2})d \lambda_{1} d\lambda_{2}\Bigg|\leq\\[2mm]
 &\leq&
   \frac{1}{\pi c^2}\Big(\Sup_{\Delta>0}\|g^*_{\Delta}\|_{\infty}\Big)^{2}  \Int_{-\infty}^{\infty}|H^{*}(\lambda_{1})|^{2}\Big[\sin\frac{(\tau_{1}-\tau_{2})\lambda_{1}}{2}\Big]^{2}d\lambda_{1}+\\[2mm]
 && +\frac{1}{\pi c^2}\Big(\Sup_{\Delta>0}\|g^*_{\Delta}\|_{\infty}\Big)^{2}\Int_{-\infty}^{\infty} |H^{*}(\lambda_{1})| \Big[|H^{*}|\ast \Phi_{T}\Big](\lambda_{1})\cdot \Big|\sin\frac{(\tau_{1}-\tau_{2})\lambda_{1}}{2}\Big| d\lambda_{1}\leq\\[2mm]
 &\leq&
     \frac{1}{\pi c^2}\Big(\Sup_{\Delta>0}\|g^*_{\Delta}\|_{\infty}\Big)^{2}\Bigg[\Int_{-\infty}^{\infty}|H^{*}(\lambda_{1})|^{2}\Big[\sin\frac{(\tau_{1}-\tau_{2})\lambda_{1}}{2}\Big]^{2}d\lambda_{1}+\\[2mm]
 && +\Bigg[\Int_{-\infty}^{\infty} |H^{*}(\lambda_{2})|^{2}\Big[\sin\frac{(\tau_{1}-\tau_{2})\lambda_{2}}{2}\Big]^{2} d\lambda_{2}\Bigg]^{\frac{1}{2}}\Big\||H^{*}|\ast\Phi_{T}\Big\|_{2}\Bigg]\leq\\[2mm]
 &\leq&
   \frac{1}{\pi c^2}\Big(\Sup_{\Delta>0}\|g^*_{\Delta}\|_{\infty}\Big)^{2}\Bigg[\sigma^2(\tau_{1},\tau_{2})+\sigma(\tau_{1},\tau_{2})\|H^*\|_2\|\Phi_{T}\|_1\Bigg]\leq
   \frac{2}{\pi c^2}\Big(\Sup_{\Delta>0}\|g^*_{\Delta}\|_{\infty}\Big)^{2}\|H^*\|_2\cdot\sigma(\tau_{1},\tau_{2})
 \end{eqnarray*}
implying (\ref{f11}). Note that the inequality is uniform in $T>0$ and $\Delta>0$.
By the Lebesgue dominated convergence, $\rho_{(T,\Delta)}(\tau_{1}, \tau_{2}) \to 0$  as $\sqrt{\sigma}(\tau_{1},\tau_{2}) \to 0$. Therefore $\rho_{(T,\Delta)}$ is continuous with respect to $\sqrt{\sigma}$.  It is clear that convergence to zero is equivalent both for $\sqrt{\sigma}$ and~$\sigma$. This proves Lemma \ref{lem1}.
\end{proof}

\begin{proof}[Proof of Theorem \ref{thm1}] By the Dudley theorem
on continuity of Gaussian processes (Dudley 1973), Statement (I) in Theorem \ref{thm1} holds if, for any $[a,b]\subset\R$,
\begin{equation}\label{f12}
\int_{0+}{\HH}_{d_{Z}}^{1/2}([a,b],\varepsilon)d\varepsilon<\infty,
\end{equation}
where $d_{Z}(\tau_{1},\tau_{2})=\left(\E|Z(\tau_{2})-Z(\tau_{1})|^{2}\right)^{1/2}$.
By the Cauchy-Schwarz inequality applied to (\ref{f6}), we obtain
\begin{equation}\label{f120}
d_{Z}^{2}(\tau_{1},\tau_{2})\leq \frac{4}{\pi}\int\limits_{-\infty}^{\infty}|H^{*}(\lambda)|^{2}\left|\sin\frac{(\tau_{2}-\tau_{1})\lambda}{2}\right|^2\,d\lambda =\frac{4}{\pi}\:\sigma^2(\tau_{1},\tau_{2}),\quad \tau_{1},\tau_{2}\in\R.
\end{equation}

The Lebesgue dominated convergence implies that $\sigma(\tau_{1},\tau_{2}) \to 0$ as $d(\tau_{1}, \tau_{2})\to 0$. Therefore $Z$ is a mean-square continuous process. Formula (\ref{f12}) holds if $\int_{0+}{\HH}_{\sigma}^{1/2}(\varepsilon)\,d\varepsilon<\infty$. The proof of Statement (I) in Theorem \ref{thm1} becomes complete by application of (\ref{f8}).

The next step  is based on the fact that $\widehat{Z}_{T,\Delta}$ is a square-Gaussian process. By Theorem 3.5.4 in (Buldygin and Kozachenko 2000), Statement (II) holds if, for any $[a,b]\subset\R,$
\begin{equation}\label{f13}
\int_{0+}{\HH}_{\rho_{(T,\Delta)}}([a,b],\varepsilon)\,d\varepsilon<\infty.
\end{equation}
Lemma \ref{lem1} stands that $\rho_{(T,\Delta)}$ is majorised by $\sqrt{\sigma}$ which is continuous with respect to $d$. Therefore condition (\ref{f8}) yields (\ref{f13}) proving Statement (II) in Theorem \ref{thm1}.

The last step of the proof is based on application of relevant results on square-Gaussian processes. We will use Lemma 4.2.1 in (Buldygin and Kozachenko 2000) whose adapted version is as follows:

\begin{lem}\label{lemBK_2000}
Let $\widehat{Z}_{T,\Delta}$ be a family of a.~s. sample continuous random processes. For any $[a,b]\subset\R$, assume that the following conditions hold\/$:$
\begin{itemize}
  \item [{\rm (i)}] $\sup\limits_{T,\Delta>0}\sup\limits_{\tau_{1},\tau_{2}\in [a,b]}{\emph \E}\exp\left\{\frac{|\widehat{Z}_{T,\Delta}(\tau_{2})-\widehat{Z}_{T,\Delta}(\tau_{1})|}{\sqrt{8}\,\rho_{(T,\Delta)}(\tau_{1},\tau_{2})}\right\}<\infty;$
  \item [{\rm (ii)}] the pseudometric $\rho_{\infty}(\tau_{1},\tau_{2})=\sup\limits_{T,\Delta>0}\rho_{(T,\Delta)}(\tau_{1},\tau_{2})$ is continuous with respect to $d$;
  \item [{\rm (iii)}] $\lim\limits_{u\downarrow 0}\sup\limits_{T,\Delta>0}\int_{0}^{u}{\HH}_{\rho_{(T,\Delta)}}^{1/2}([a,b],\varepsilon)\,d\varepsilon=0.$
\end{itemize}
Then, for any $\delta>0,$ we have$:$\quad
$
\lim\limits_{h\downarrow 0}\sup\limits_{T,\Delta>0}\textsf{\emph{P}}\left\{\displaystyle{\sup_{\substack{\tau_{1},\tau_{2} \in [a,b]\\|\tau_{2}-\tau_{1}|<h}}}|\widehat{Z}_{T,\Delta}(\tau_{2})-\widehat{Z}_{T,\Delta}(\tau_{1})|>\delta\right\}=0.
$
\end{lem}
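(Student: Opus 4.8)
The plan is to deduce the uniform stochastic equicontinuity of the family $\{\widehat Z_{T,\Delta}\}$ from a chaining estimate for square-Gaussian processes (Buldygin and Kozachenko 2000, Chapter~4); the whole difficulty is to keep every constant in that estimate free of $(T,\Delta)$, which is arranged by using condition~{\rm (i)} to control the increments and condition~{\rm (iii)} to control the entropy integral, after first passing from a $d$-neighbourhood to a $\rho_{(T,\Delta)}$-neighbourhood with the help of condition~{\rm (ii)}.

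First I would reduce the metric. Set
$$
\gamma(h)=\sup\big\{\rho_{\infty}(\tau_{1},\tau_{2})\,:\,\tau_{1},\tau_{2}\in[a,b],\ |\tau_{1}-\tau_{2}|\le h\big\},\qquad h>0.
$$
By condition~{\rm (ii)}, $\rho_{\infty}(\tau_{1},\tau_{2})\to 0$ as $|\tau_{1}-\tau_{2}|\to 0$ uniformly in $\tau_{1},\tau_{2}\in[a,b]$, that is, $\gamma(h)\downarrow 0$ as $h\downarrow 0$; and since $\rho_{(T,\Delta)}\le\rho_{\infty}$ for all $T,\Delta>0$, every $d$-neighbourhood $\{(\tau_{1},\tau_{2})\in[a,b]^{2}:|\tau_{2}-\tau_{1}|<h\}$ is contained in the $\rho_{(T,\Delta)}$-neighbourhood $\{(\tau_{1},\tau_{2})\in[a,b]^{2}:\rho_{(T,\Delta)}(\tau_{1},\tau_{2})\le\gamma(h)\}$, uniformly in $(T,\Delta)$. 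Hence it suffices to estimate, uniformly in $(T,\Delta)$, the probability $\Pb\{\omega_{(T,\Delta)}(\gamma(h))>\delta\}$, where $\omega_{(T,\Delta)}(v)$ stands for the supremum of $|\widehat Z_{T,\Delta}(\tau_{2})-\widehat Z_{T,\Delta}(\tau_{1})|$ over $\tau_{1},\tau_{2}\in[a,b]$ with $\rho_{(T,\Delta)}(\tau_{1},\tau_{2})\le v$ (a well-defined random variable thanks to the assumed a.~s. sample continuity), and to show that this estimate tends to $0$ as $h\downarrow 0$. Note that $\NN_{\rho_{(T,\Delta)}}([a,b],\varepsilon)\le\NN_{\rho_{\infty}}([a,b],\varepsilon)<\infty$ by condition~{\rm (ii)} and the compactness of $[a,b]$, so the chaining below is legitimate; finiteness of its entropy integral is part of condition~{\rm (iii)}.

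Next I would apply the chaining estimate. By Remark~\ref{rem3} each $\widehat Z_{T,\Delta}$ is a centred square-Gaussian process, so each increment $\widehat Z_{T,\Delta}(\tau_{2})-\widehat Z_{T,\Delta}(\tau_{1})$ belongs to $\textrm{Exp}_{(1)}(\Omega)$ with Orlicz norm comparable to its $L_{2}$-norm, the latter being $\rho_{(T,\Delta)}(\tau_{1},\tau_{2})$; condition~{\rm (i)} is exactly the form of this comparison that is uniform in $(T,\Delta)$ and in $\tau_{1},\tau_{2}$. Feeding this into the chaining argument on $([a,b],\rho_{(T,\Delta)})$ produces, for every $v>0$ and $u>0$, a bound
$$
\Pb\big\{\omega_{(T,\Delta)}(v)>u\big\}\ \le\ \Psi\big(u,\,v,\,I_{(T,\Delta)}(v)\big),\qquad I_{(T,\Delta)}(v):=\int_{0}^{v}{\HH}^{1/2}_{\rho_{(T,\Delta)}}([a,b],\varepsilon)\,d\varepsilon,
$$
in which $\Psi$ is an explicit function whose process-dependent inputs are only $v$ and $I_{(T,\Delta)}(v)$, whose remaining constants depend solely on the finite supremum in~{\rm (i)} and on $b-a$, and which satisfies $\Psi(u,v,s)\to 0$ as $(v,s)\to(0,0)$ for each fixed $u>0$. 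Applying this with $v=\gamma(h)$ and $u=\delta$: the first step gives $\gamma(h)\to 0$, and condition~{\rm (iii)} gives $\sup_{T,\Delta>0}I_{(T,\Delta)}(v)\to 0$ as $v\downarrow 0$, whence $\sup_{T,\Delta>0}I_{(T,\Delta)}(\gamma(h))\to 0$; therefore $\sup_{T,\Delta>0}\Pb\{\omega_{(T,\Delta)}(\gamma(h))>\delta\}\to 0$ as $h\downarrow 0$, which, combined with the inclusion of neighbourhoods from the first step, is exactly the assertion.

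The step I expect to be the main obstacle is producing this chaining bound with constants genuinely independent of $(T,\Delta)$: one must formulate the $\textrm{Exp}_{(1)}$/square-Gaussian modulus estimate so that the only process-dependent quantities on its right-hand side are the uniform supremum of condition~{\rm (i)} and the entropy integral $I_{(T,\Delta)}$ of condition~{\rm (iii)}, and then check that the tail function $\Psi(\delta,\cdot,\cdot)$ indeed vanishes as its last two arguments tend to $0$. It is here that the square-Gaussian structure --- rather than mere a.~s. continuity --- enters, via the uniform Orlicz control of increments from~{\rm (i)}; condition~{\rm (iii)} then supplies the uniform smallness of the chaining integral near the origin. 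A secondary and routine point is confirming that $\rho_{(T,\Delta)}\le\rho_{\infty}$, with $\rho_{\infty}$ continuous with respect to $d$, really delivers the uniform total boundedness and the monotone finite behaviour of ${\HH}_{\rho_{(T,\Delta)}}([a,b],\cdot)$ needed to run the chaining.
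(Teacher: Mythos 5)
There is no proof in the paper for you to match: the lemma is imported verbatim as an ``adapted version'' of Lemma~4.2.1 in (Buldygin and Kozachenko 2000) and is stated without proof, so what you are attempting is a reconstruction of the cited result. Your reduction step is fine: since $\rho_{(T,\Delta)}\le\rho_{\infty}$ and $\rho_{\infty}$ is a pseudometric continuous with respect to $d$, the triangle inequality plus compactness of $[a,b]$ give $\gamma(h)\downarrow 0$, hence the $d$-modulus of continuity is dominated, uniformly in $(T,\Delta)$, by the $\rho_{(T,\Delta)}$-modulus at level $\gamma(h)$, and the ordering $\NN_{\rho_{(T,\Delta)}}([a,b],\varepsilon)\le\NN_{\rho_{\infty}}([a,b],\varepsilon)<\infty$ is correct.

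The genuine gap is the central chaining bound, which you assert rather than derive, and which in the form you state is doubtful. Condition (i) gives only a uniform exponential-moment ($\textrm{Exp}_{(1)}$-type) control of the normalized increments; for increments with such sub-exponential tails the standard entropy machinery --- precisely the results the paper uses elsewhere, namely Theorems 6.2.2--6.2.3 and 3.3.4 of Buldygin and Kozachenko (2000), cf.\ the constant $A_{T,\Delta}$ in Theorem 4.2, which involves $\ln\big(1+\NN_{\rho_{(T,\Delta)}}(\varepsilon)\big)$ --- yields modulus and supremum bounds through the first-power entropy integral $\int_{0}^{v}\ln\big(1+\NN_{\rho_{(T,\Delta)}}(\varepsilon)\big)\,d\varepsilon$, not through $I_{(T,\Delta)}(v)=\int_{0}^{v}\HH^{1/2}_{\rho_{(T,\Delta)}}([a,b],\varepsilon)\,d\varepsilon$. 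A function $\Psi(u,v,s)$ whose only process-dependent inputs are $v$ and the $\HH^{1/2}$-integral, with constants governed solely by the supremum in (i), is exactly the nontrivial content of the lemma; you acknowledge it as ``the main obstacle'' but do not construct it, and it does not follow from generic chaining for exponential-tail increments (for square-Gaussian processes one generically needs the first-power entropy functional as well as the square-root one). As written, the proposal therefore reduces the lemma to an unproved claim: to close it you would either have to reproduce the actual argument of Lemma~4.2.1 in Buldygin and Kozachenko (2000), checking that its hypotheses and entropy exponent really match conditions (i)--(iii) as adapted here, or run the chaining with the first-power entropy integral, which condition (iii) as stated does not directly control.
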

Let us check whether the assumptions of Lemma \ref{lemBK_2000} hold. Since $\widehat{Z}_{T,\Delta}$ is a square-Gaussian zero-mean process, we have, by Theorem 6.2.2 in (Buldygin and Kozachenko 2000), for any $T>0$ and $\Delta>0$:
\begin{equation}\label{f14}
\sup\limits_{T,\Delta>0}\sup\limits_{\tau_{1},\tau_{2}\in \R}\E\exp\left\{\frac{|\widehat{Z}_{T,\Delta}(\tau_{2})-\widehat{Z}_{T,\Delta}(\tau_{1})|}{\sqrt{8}\,\rho_{(T,\Delta)}(\tau_{1},\tau_{2})}\right\}<\infty.
\end{equation}
 The Lebesgue dominated convergence implies that the pseudometric $\sqrt{\sigma}$ is continuous with respect to the metric~$d$. By Lemma \ref{lem1}, the pseudometric
\begin{equation}\label{f15}
\rho_{\infty}(\tau_{1},\tau_{2})=\sup\limits_{T,\Delta>0}\rho_{(T,\Delta)}(\tau_{1},\tau_{2}), \quad \tau_{1},\tau_{2} \in \R,
\end{equation}
is continuous with respect to $d$.
By inequality (\ref{f11}), condition (\ref{f8}) yields that for any $[a,b]\subset\R$
\begin{equation}\label{f16}
\lim\limits_{u\downarrow 0}\sup\limits_{T,\Delta>0}\int_{0}^{u}{\HH}_{\rho_{(T,\Delta)}}^{1/2}([a,b],\varepsilon)\,d\varepsilon\leq \lim\limits_{u\downarrow 0}\int_{0}^{u}{\HH}_{\rho_{\infty}}^{1/2}([a,b],\varepsilon)\,d\varepsilon=0.
\end{equation}
Formulas (\ref{f14})--(\ref{f16}) show that conditions of Lemma \ref{lemBK_2000} hold. Therefore, for any $\delta>0$ and any $[a,b]\subset\R$,
\begin{equation}\label{f17}
\lim\limits_{h\downarrow 0}\sup\limits_{T,\Delta>0}\textsf{P}\left\{\displaystyle{\sup_{\substack{\tau_{1},\tau_{2} \in [a,b]\\|\tau_{2}-\tau_{1}|<h}}}|\widehat{Z}_{T,\Delta}(\tau_{2})-\widehat{Z}_{T,\Delta}(\tau_{1})|>\delta\right\}=0.
\end{equation}
Theorem 3.2 in (Blazhievska and Zaiats 2018) 
combined with (\ref{f17}) implies that the
Prohorov theorem on weak convergence of
stochastic processes in $C[a,b]$ holds (e.g., (Billingsley 2013, 59)). This proves Statement
(III) in Theorem \ref{thm1}. In this way, the proof of Theorem \ref{thm1} is complete.
\end{proof}


\section{Confidence intervals for the uniform norms of $\widehat{Z}_{T,\Delta}$ and $Z$}

Theorem \ref{thm1} states a CLT in $\Ceabe$ for the centred estimator $\widehat{H}_{T,\Delta}$. In particular, for any $x>0$
 \begin{eqnarray*}
  \lefteqn{\lim_{^{T \to \infty}_{\Delta\to\infty}}\Pb \Big\{\Sup_{\tau \in [a,b]}\Big|\sqrt{T}\big(\widehat{H}_{T,\Delta}(\tau)-\E \widehat{H}_{T,\Delta}(\tau)\big)\Big|>x\Big\}=} \\[2mm]
  && =\lim_{^{T \to \infty}_{\Delta\to\infty}}\Pb \Big\{\Sup_{\tau \in [a,b]}\Big|\widehat{Z}_{T,\Delta}(\tau)\Big|>x\Big\}=\Pb \Big\{\Sup_{\tau \in [a,b]}\Big|Z(\tau)\Big|>x\Big\}.
 \end{eqnarray*}
This equality enables us to construct confidence intervals for $\widehat{Z}_{T,\Delta}$ and $Z$ which are accurate enough for large $T$ and $\Delta$. Observe that each of the parameters $T$ and $\Delta$ tends to infinity on its own; a link between them will appear latter on, when we look at the error term.
We use two different techniques for studying tails of the uniform norm for the underlying processes:
\begin{itemize}
  \item the pre-limiting process $\widehat{Z}_{T,\Delta}$ requires a theory related to square-Gaussian  processes (or $\textrm{Exp}_{(1)}$-processes), see (Buldygin and Kozachenko 2000, Chapters 4--6) and (Kozachenko and Rozora 2015);
  \item the limiting process $Z$ is treated by means of comparison principles related to Gaussian processes, see (Buldygin 1983) and (Buldygin and Kharazishvili 2000, Chapter 12).
\end{itemize}
Note that each of these techniques requires the mentioned zero-mean processes to be separable and to satisfy certain majorisation between their entropy characteristics or their correlation functions.

\subsection{Bounds on large deviations of the supremum of $Z$}\label{subsection1}
Since the distribution of a Gaussian process is completely determined by the covariance function of this process, it is natural to anticipate that a majorisation of covariances would lead to a certain subordination between the processes. The well-known Anderson and Slepian inequalities, see Theorem~3 and Lemma~3 respectively in (Buldygin and Kharazishvili 2000, Chapter~12), turn out to be technical tools for obtaining useful results on comparison of Gaussian processes. Let us focus on them in more detail.

Note that (\ref{f70}) and (\ref{f120}) imply the following majorisation between the mean-square errors of $Z$ and $Y$:
\begin{equation}\label{fpo}
\E|Z(\tau_2)-Z(\tau_1)|^2 \leq\frac{4}{\pi}\Int_{-\infty}^{\infty}|H^*(\lambda)|^2 \sin^2{\frac{(\tau_2-\tau_1)\lambda}{2}} d\lambda=2\E|Y(\tau_2)-Y(\tau_1)|^2,  \quad \tau_1, \tau_2 \in \R.
\end{equation}
Recall that $Z$ and $Y$ are separable, Gaussian and have zero means. We will compare the limiting process~$Z$ to a new process obtained by adding an independent random variable to the output $Y$ scaled by $\sqrt{2}$. This idea enables us to use  Slepian's and Anderson's inequalities simultaneously.

\begin{rem}\label{rem_ZYTDelta}
Since formula (\ref{fpo}) gives majorisation of  $Z$ in terms of the output $Y$ only, where neither of the parameters $T$ and $\Delta$ is involved, the confidence intervals for $Z$ constructed by comparison with $Y$ will be free of these parameters. \hfill$\triangleleft$
\end{rem}

Let us give an adapted version of Theorem 4 in (Buldygin and Kharazishvili 2000, Chapter 12). The proof is dropped; it follows the lines of the source theorem.

\begin{thm}\label{thm2}
Assume that $\xi$ is an $N(0,1)$-distributed random variable independent of $Y$ and let
$$
b^{2}(\tau)=\Sup_{\tau \in [a,b]}\Bigg(\max\Big(0,\ \emph{\E}(\sqrt{2}Y)^2(\tau)-\emph{\E} Z^2(\tau)\Big)\Bigg)-\Big(\emph{\E} (\sqrt{2}Y)^2(\tau)-\emph{\E} Z^2(\tau)\Big), \quad \tau \in [a,b].
$$
Then, for any  $x>0$,  we have
\begin{equation}\label{fpb}
{\emph \Pb} \Bigg\{\Sup_{\tau \in [a,b]}\Big|Z(\tau)\Big|>x\Bigg\}\leq 2 {\emph \Pb} \Bigg\{\Sup_{\tau \in [a,b]}\Big(\sqrt{2}Y(\tau)+\xi b(\tau)\Big)>x\Bigg\}.
\end{equation}
\end{thm}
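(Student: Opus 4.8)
The plan is to interpose, between $Z$ and $\sqrt{2}Y+\xi b$, an auxiliary Gaussian process whose pointwise variances agree with those of $\sqrt{2}Y+\xi b$, and then to apply Anderson's inequality (Theorem~3 in (Buldygin and Kharazishvili 2000, Chapter~12)) to go from $Z$ to the auxiliary process and Slepian's inequality (Lemma~3 in the same source) to go from it to $\sqrt{2}Y+\xi b$. Put $M=\Sup_{\tau\in[a,b]}\max\bigl(0,\ \E(\sqrt{2}Y)^2(\tau)-\E Z^2(\tau)\bigr)$, the supremum appearing in the definition of $b^2$, so that $b^2(\tau)=M-\bigl(\E(\sqrt{2}Y)^2(\tau)-\E Z^2(\tau)\bigr)\ge 0$. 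Since $\E(\sqrt{2}Y)^2(\tau)=2\E Y^2(\tau)$ is a finite constant (by (\ref{f70})) and $\E Z^2(\tau)\ge 0$, the number $M$ is finite, $b$ is a well-defined nonnegative function, and it is continuous on $[a,b]$ because $\E Y^2(\tau)$ is constant and $\E Z^2(\tau)=C_\infty(\tau,\tau)$ is continuous (dominated convergence). Hence $\sqrt{2}Y+\xi b$ is a separable, a.~s.\ sample continuous, zero-mean Gaussian process on $[a,b]$. Now introduce $Z'(\tau)=Z(\tau)+\eta\sqrt{M}$, $\tau\in[a,b]$, where $\eta\sim N(0,1)$ is independent of $Z$, $Y$ and $\xi$; it is again separable, a.~s.\ continuous, zero-mean and Gaussian. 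Using $\xi\perp Y$ and $\eta\perp Z$, one checks that $\E Z'^2(\tau)=\E Z^2(\tau)+M=\E(\sqrt{2}Y)^2(\tau)+b^2(\tau)=\E\bigl(\sqrt{2}Y(\tau)+\xi b(\tau)\bigr)^2$ for all $\tau$, whereas the incremental variances satisfy $\E|Z'(\tau_2)-Z'(\tau_1)|^2=\E|Z(\tau_2)-Z(\tau_1)|^2\le 2\E|Y(\tau_2)-Y(\tau_1)|^2\le 2\E|Y(\tau_2)-Y(\tau_1)|^2+(b(\tau_2)-b(\tau_1))^2=\E\bigl|(\sqrt{2}Y+\xi b)(\tau_2)-(\sqrt{2}Y+\xi b)(\tau_1)\bigr|^2$, the first inequality being exactly the majorisation (\ref{fpo}).

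First I would show that $\Pb\bigl\{\Sup_{\tau\in[a,b]}|Z(\tau)|>x\bigr\}\le\Pb\bigl\{\Sup_{\tau\in[a,b]}|Z'(\tau)|>x\bigr\}$. Indeed $Z'=Z+V$, where $V$ is the constant random element $\tau\mapsto\eta\sqrt{M}$ of $C[a,b]$, which is centred Gaussian and independent of $Z$; as $B=\{\varphi\in C[a,b]:\ \|\varphi\|_\infty\le x\}$ is a symmetric convex set, Anderson's inequality yields $\Pb\{Z'\in B\}\le\Pb\{Z\in B\}$, which is the claimed bound (and it is trivial when $M=0$).

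Next, the centred Gaussian process $Z'$ is symmetric, so $-Z'$ has the same law as $Z'$, and a union bound gives $\Pb\bigl\{\Sup_{\tau\in[a,b]}|Z'(\tau)|>x\bigr\}\le\Pb\bigl\{\Sup_{\tau\in[a,b]}Z'(\tau)>x\bigr\}+\Pb\bigl\{\Sup_{\tau\in[a,b]}(-Z'(\tau))>x\bigr\}=2\,\Pb\bigl\{\Sup_{\tau\in[a,b]}Z'(\tau)>x\bigr\}$, which produces the factor $2$ in (\ref{fpb}). Finally I would apply Slepian's inequality to the separable zero-mean Gaussian processes $Z'$ and $\sqrt{2}Y+\xi b$ on $[a,b]$: they have equal pointwise variances, while $\sqrt{2}Y+\xi b$ has incremental variances not smaller than those of $Z'$ — equivalently $\sqrt{2}Y+\xi b$ is the less correlated of the two — so $\Sup_{\tau\in[a,b]}Z'(\tau)$ is stochastically dominated by $\Sup_{\tau\in[a,b]}(\sqrt{2}Y(\tau)+\xi b(\tau))$; in particular $\Pb\bigl\{\Sup_{\tau\in[a,b]}Z'(\tau)>x\bigr\}\le\Pb\bigl\{\Sup_{\tau\in[a,b]}(\sqrt{2}Y(\tau)+\xi b(\tau))>x\bigr\}$ for every $x>0$. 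Chaining the three bounds gives (\ref{fpb}).

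I expect the main obstacle to be twofold. Conceptually, the crux is the choice of $Z'$: one has to notice that adding the single constant Gaussian term $\eta\sqrt{M}$ to $Z$ is precisely what equalises the pointwise variances with those of $\sqrt{2}Y+\xi b$ — this is the role of the particular form of $b^2(\tau)$ in the statement — while leaving the increments of $Z'$ identical to those of $Z$, so that the one-sided majorisation (\ref{fpo}) of the increments of $Z$ by those of $\sqrt{2}Y$ survives and feeds Slepian's inequality in the right direction. Technically, Slepian's and Anderson's inequalities are established first for finite index sets (Gaussian vectors), and transferring them to the processes on $[a,b]$ requires the usual monotone passage along a countable dense subset; this is legitimate because $Z$, $Y$, $Z'$ and $\sqrt{2}Y+\xi b$ are all separable and a.~s.\ sample continuous on $[a,b]$, the a.~s.\ continuity of $Z$ following from that of $Y$ as in Remarks~\ref{rem4}--\ref{rem_DFY}. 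Keeping track of the direction of the inequality in Slepian's lemma is the classical pitfall to avoid.
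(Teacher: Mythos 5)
Your proof is correct and follows essentially the same route the paper indicates: it is exactly the Anderson--Slepian comparison scheme of Theorem~4 in (Buldygin and Kharazishvili 2000, Chapter~12) that the paper invokes, with the variance-equalising independent Gaussian summand ($\eta\sqrt{M}$ on the $Z$-side, $\xi b(\tau)$ on the $\sqrt{2}Y$-side) fed by the increment majorisation (\ref{fpo}), Anderson's inequality removing the added term, symmetry giving the factor $2$, and Slepian's inequality applied in the right direction. No gaps beyond the routine finite-set-to-separable-process passage, which you correctly note.
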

\noindent We will specify the structure of the function $\big(b(\tau),\tau \in [a,b]\big)$ below. Since
$$
\E (\sqrt{2}Y)^2(\tau)-\E Z^2(\tau)=\frac{1}{2\pi}\Int_{-\infty}^{\infty}|H^*(\lambda)|^2 d\lambda-\frac{1}{2\pi}\Int_{-\infty}^{\infty}e^{i(2\tau)\lambda}(H^*(\lambda))^2 d\lambda=\|H\|^2_2-\Int_{-\infty}^{\infty}H(2\tau-s)H(s)ds,
$$
and since \quad
$
\Int_{-\infty}^{\infty}H(2\tau-s)H(s)ds\leq\Big|\Int_{-\infty}^{\infty}H(2\tau-s)H(s)ds\Big|\leq\|H\|^2_2
$\quad for any $\tau \in [a,b]$,
one has
$$
\max\left(0,\ \E(\sqrt{2}Y)^2(\tau)-\E Z^2(\tau)\right)=\|H\|^2_2-\Int_{-\infty}^{\infty}H(2\tau-s)H(s)ds.
$$
Therefore
\begin{equation}\label{F:b}
b^2(\tau)=\Int_{-\infty}^{\infty}H(2\tau-s)H(s)ds-\Inf_{\tau \in [a,b]}\Big(\Int_{-\infty}^{\infty}H(2\tau-s)H(s)ds\Big), \quad \tau \in [a,b].
\end{equation}
Note that all these calculations are only based on the assumption that $H$ is $L_2$-integrable.

Inequality (\ref{fpb}) in Theorem \ref{thm2} may be rewritten in another way. The following statements can be extracted from (Buldygin and Kharazishvili 2000, Chapter~12); Corollary \ref{cor1pb} is contained in the proof of Theorem~3, and Corollary~\ref{cor2pb} is adapted from Lemma~4. We drop explicit proofs of these corollaries.

\begin{cor}\label{cor1pb}
For any $x>0$, we have
\begin{equation*}\label{fpb11}
\emph{\Pb} \Bigg\{\Sup_{\tau \in [a,b]}\Big|Z(\tau)\Big|>x\Bigg\}\leq2\emph{\Pb} \Bigg\{\Sup_{\tau \in [a,b]}Y(\tau)>\frac{\gamma x}{\sqrt{2}}\Bigg\}+2\emph{\Pb} \Bigg\{\xi \cdot\Sup_{\tau \in [a,b]}b(\tau)>(1-\gamma)x\Bigg\}.
\end{equation*}
Here, $\gamma$ is any number belonging to $[0,1]$ and $b$ is as defined in (\ref{F:b}).
\end{cor}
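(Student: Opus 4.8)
The plan is to obtain Corollary~\ref{cor1pb} directly from Theorem~\ref{thm2} by splitting the threshold $x$ between the two summands of the comparison field and then applying a union bound. Theorem~\ref{thm2} has already performed the only genuinely probabilistic step, the Slepian--Anderson comparison, which dominates the distribution of $\Sup_{\tau\in[a,b]}|Z(\tau)|$ by $2\,\Pb\{\Sup_{\tau\in[a,b]}(\sqrt2\,Y(\tau)+\xi b(\tau))>x\}$, where $\xi\sim N(0,1)$ is independent of $Y$ and, by formula~(\ref{F:b}), $b(\tau)\ge 0$ for every $\tau\in[a,b]$. It therefore remains only to estimate this last probability.

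Fix $\gamma\in[0,1]$ and $x>0$. First I would record the elementary pointwise implication: if $\sqrt2\,Y(\tau_0)+\xi b(\tau_0)>x$ at some $\tau_0\in[a,b]$, then at least one of $\sqrt2\,Y(\tau_0)>\gamma x$ and $\xi b(\tau_0)>(1-\gamma)x$ holds, since if both failed the two terms would add up to at most $\gamma x+(1-\gamma)x=x$. Using separability of $Y$ (hence of $\sqrt2\,Y+\xi b$, since $b$ is a fixed nonrandom function of $\tau$) to replace each supremum over $[a,b]$ by a supremum over a countable dense subset, this yields the inclusion of events
\[
\Big\{\Sup_{\tau\in[a,b]}\big(\sqrt2\,Y(\tau)+\xi b(\tau)\big)>x\Big\}\subset\Big\{\Sup_{\tau\in[a,b]}\sqrt2\,Y(\tau)>\gamma x\Big\}\cup\Big\{\exists\,\tau\in[a,b]:\ \xi b(\tau)>(1-\gamma)x\Big\}.
\]

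Next I would simplify the second event on the right. Since $b(\tau)\ge0$ and $(1-\gamma)x\ge0$, the inequality $\xi b(\tau)>(1-\gamma)x$ forces $\xi>0$ (the only borderline case being $\gamma=1$, which is handled the same way), and then $\xi b(\tau)\le\xi\,\Sup_{\tau\in[a,b]}b(\tau)$; hence that event is contained in $\{\xi\cdot\Sup_{\tau\in[a,b]}b(\tau)>(1-\gamma)x\}$. After rescaling the $Y$-event by $\sqrt2$, a union bound applied to the displayed inclusion gives
\[
\Pb\Big\{\Sup_{\tau\in[a,b]}\big(\sqrt2\,Y(\tau)+\xi b(\tau)\big)>x\Big\}\le\Pb\Big\{\Sup_{\tau\in[a,b]}Y(\tau)>\frac{\gamma x}{\sqrt2}\Big\}+\Pb\Big\{\xi\cdot\Sup_{\tau\in[a,b]}b(\tau)>(1-\gamma)x\Big\},
\]
and multiplying by $2$ and substituting into the bound of Theorem~\ref{thm2} produces exactly the asserted inequality.

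I do not expect a real obstacle: the comparison content lives entirely in Theorem~\ref{thm2}, and Corollary~\ref{cor1pb} is a routine threshold-splitting argument followed by a union bound. The only points that need a line of care are the sign of $\xi$ when passing from $\xi b(\tau)>(1-\gamma)x$ to $\xi\cdot\Sup_\tau b(\tau)>(1-\gamma)x$ (where the nonnegativity $b\ge0$ coming from~(\ref{F:b}) is used), the degenerate endpoints $\gamma=0$ and $\gamma=1$, and the measurability of the suprema, which is ensured by the standing separability assumption on $Y$.
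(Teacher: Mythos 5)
Your proof is correct: the paper itself drops the proof of Corollary~\ref{cor1pb}, citing the proof of Theorem~3 in (Buldygin and Kharazishvili 2000, Chapter~12), and your derivation from Theorem~\ref{thm2} by splitting the threshold $x$ into $\gamma x+(1-\gamma)x$, using $b\ge 0$ from (\ref{F:b}) to force $\xi>0$ and pass to $\xi\,\Sup_{\tau}b(\tau)$, and then applying a union bound is exactly the intended argument. The points you flag (sign of $\xi$, the endpoints $\gamma=0,1$, and measurability via separability) are handled adequately, so no gap remains.
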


\begin{cor}\label{cor2pb}
For any $x>0$, we have
\begin{equation*}\label{f12pb}
\emph{\Pb} \Bigg\{\Sup_{\tau \in [a,b]}\Big|Z(\tau)\Big|>x\Bigg\}\leq 2\emph{\Pb} \Bigg\{\Sup_{\tau \in [a,b]}|Y(\tau)|>\frac{x}{2\sqrt{2}}\Bigg\}+4 \exp\Bigg\{-\frac{x^{2}}{B_{[a,b]}}\Bigg\},
\end{equation*}
where\quad
$
B_{[a,b]}=16\Sup_{\tau\in [a,b]}\Big|{\emph\E} (\sqrt{2}Y)^2(\tau)-{\emph\E}  Z^2(\tau)\Big|=16\|H\|^2_2-16\cdot\Inf_{\tau \in [a,b]}\Big(\Int_{-\infty}^{\infty}H(2\tau-s)H(s)ds\Big).
$
\end{cor}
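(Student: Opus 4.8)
The plan is to derive Corollary \ref{cor2pb} from Corollary \ref{cor1pb} by specialising the free parameter $\gamma$ and then estimating the Gaussian tail that appears. First I would set $\gamma=\tfrac12$ in Corollary \ref{cor1pb}, so that its right-hand side becomes $2\,\Pb\{\Sup_{\tau\in[a,b]}Y(\tau)>x/(2\sqrt2)\}+2\,\Pb\{\xi\cdot\Sup_{\tau\in[a,b]}b(\tau)>x/2\}$. Replacing $\Sup Y$ by $\Sup|Y|$ in the first term can only enlarge the probability and already produces the first summand of the asserted inequality, so the whole matter reduces to controlling the second term.

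Write $\beta=\Sup_{\tau\in[a,b]}b(\tau)$, so that $\xi\beta$ is $N(0,\beta^2)$; the elementary Gaussian tail estimate $\Pb\{N(0,s^2)>t\}\le\tfrac12\exp\{-t^2/(2s^2)\}$ for $t\ge0$ gives $2\,\Pb\{\xi\beta>x/2\}\le\exp\{-x^2/(8\beta^2)\}$. It then remains to compare $8\beta^2$ with $B_{[a,b]}$. By the closed form (\ref{F:b}) of $b$ one has $\beta^2=\Sup_{\tau\in[a,b]}\Int_{-\infty}^{\infty}H(2\tau-s)H(s)\,ds-\Inf_{\tau\in[a,b]}\Int_{-\infty}^{\infty}H(2\tau-s)H(s)\,ds$, whereas the computation carried out right after Theorem \ref{thm2} shows $\E(\sqrt2\,Y)^2(\tau)-\E Z^2(\tau)=\|H\|_2^2-\Int_{-\infty}^{\infty}H(2\tau-s)H(s)\,ds\ge0$, the non-negativity being the Cauchy--Schwarz bound $\Int_{-\infty}^{\infty}H(2\tau-s)H(s)\,ds\le\|H\|_2^2$ already recorded in the text. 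Hence $\Sup_{\tau\in[a,b]}|\E(\sqrt2\,Y)^2(\tau)-\E Z^2(\tau)|=\|H\|_2^2-\Inf_{\tau\in[a,b]}\Int_{-\infty}^{\infty}H(2\tau-s)H(s)\,ds=B_{[a,b]}/16$, and since the same Cauchy--Schwarz bound gives $\Sup_{\tau\in[a,b]}\Int_{-\infty}^{\infty}H(2\tau-s)H(s)\,ds\le\|H\|_2^2$, we conclude $\beta^2\le B_{[a,b]}/16$. Therefore $8\beta^2\le B_{[a,b]}$, so $\exp\{-x^2/(8\beta^2)\}\le\exp\{-x^2/B_{[a,b]}\}$; combining this with the (deliberately conservative) constants of Lemma~4 in (Buldygin and Kharazishvili 2000), from which the corollary is adapted, yields the stated bound with the factor $4$.

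I do not expect a genuine obstacle here: once Corollary \ref{cor1pb} and the explicit form (\ref{F:b}) are in hand, the argument is essentially bookkeeping plus a standard Gaussian tail bound. The one step that needs a little care is the passage from $\beta^2$, which is the oscillation over $[a,b]$ of the auto-convolution $\tau\mapsto\Int_{-\infty}^{\infty}H(2\tau-s)H(s)\,ds$, to $B_{[a,b]}$, which is $16$ times $\|H\|_2^2$ minus the infimum of that same auto-convolution; this rests solely on $\Int_{-\infty}^{\infty}H(2\tau-s)H(s)\,ds\le\|H\|_2^2$ for every $\tau$, valid under the single assumption $H\in L_2(\R)$. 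Keeping the multiplicative constants consistent with the source is the only mildly delicate point.
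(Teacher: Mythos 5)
Your argument is correct: specialising Corollary \ref{cor1pb} to $\gamma=1/2$, bounding $\Sup_{\tau}Y(\tau)$ by $\Sup_{\tau}|Y(\tau)|$, applying the standard Gaussian tail bound to $\xi\,\Sup_{\tau}b(\tau)$, and using the Cauchy--Schwarz fact $\int H(2\tau-s)H(s)\,ds\le\|H\|_2^2$ to get $\sup_{\tau}b^2(\tau)\le B_{[a,b]}/16$ (hence $8\sup_\tau b^2(\tau)\le B_{[a,b]}$) yields the stated inequality, indeed with room to spare in the factor $4$. The paper itself drops the proof and merely adapts Lemma~4 of (Buldygin and Kharazishvili 2000, Chapter~12), and your derivation is exactly the natural filling-in of that adaptation, so it follows essentially the same route.
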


\noindent Corollaries \ref{cor1pb}--\ref{cor2pb} show a way to estimate the tails of (non-stationary) $Z$  through those of (stationary) $Y$. General results on behaviour of the tails of Gaussian stationary processes may be found in classical sources such as (Cram\'er and Leadbetter 1967, Chapters 9--11), (Marcus and Shepp 1972), (Fernique
1975), (Adler 2000, Chapter V), as well as in (Buldygin 1983) and (Lifshits 1995, Sections 12--13).


\subsection{Inequalities for large deviations of $\widehat{Z}_{T,\Delta}$}\label{subsection2}
Our next step is to study centred square-Gaussian processes belonging to the Orlicz space $\textrm{Exp}_{(1)}(\Omega)$. In the section, we focus on constructing upper bounds for the distribution of the separable square-Gaussian process $\widehat{Z}_{T,\Delta}$ and that of its supremum.
We will use technical tools developed in (Buldygin and Kozachenko 2000, Chapters 4--6), (Kozachenko and Rozora 2015) and (Kozachenko et al. 2017, Chapter 6).
The next result gives point-wise confidence intervals for the centred estimator $\widehat{H}_{T,\Delta}$.
\begin{thm}\label{thm3}
For all $\tau \in \R$ and $x>0$, we have
 \begin{eqnarray}
  && \Pb \Big\{\widehat{H}_{T,\Delta}(\tau)-\E\widehat{H}_{T,\Delta}(\tau)\geq x\:\sqrt{\Var \widehat{H}_{T,\Delta}(\tau)}  \Big\}\leq K(x),\nonumber \\[2mm]
  && \Pb \Big\{\widehat{H}_{T,\Delta}(\tau)-\E\widehat{H}_{T,\Delta}(\tau)\leq -x\:\sqrt{\Var \widehat{H}_{T,\Delta}(\tau)} \Big\}\leq K(x), \label{fpbZ1}\\[2mm]
  && \Pb \Big\{|\widehat{H}_{T,\Delta}(\tau)-\E\widehat{H}_{T,\Delta}(\tau)|\geq x\:\sqrt{\Var \widehat{H}_{T,\Delta}(\tau)} \Big\}\leq 2K(x),\nonumber
 \end{eqnarray}
where\/ $\Var\widehat{H}_{T,\Delta}(\tau)=\E|\widehat{H}_{T,\Delta}(\tau)-\E\widehat{H}_{T,\Delta}(\tau)|^2>0$ and
$
K(x)=(1+\sqrt{2}x)^{1/2}\exp{\Big(-x/\sqrt{2}\Big)}, x>0.
$
\end{thm}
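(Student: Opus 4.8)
The plan is to reduce all three inequalities to a single tail estimate for one centred square-Gaussian random variable. Fix $\tau\in\R$, $T>0$, $\Delta>0$, and set $\eta:=\widehat{H}_{T,\Delta}(\tau)-\E\widehat{H}_{T,\Delta}(\tau)$. By Remark~\ref{rem3} the process $\widehat{Z}_{T,\Delta}=\sqrt{T}\,\big(\widehat{H}_{T,\Delta}-\E\widehat{H}_{T,\Delta}\big)$ is square-Gaussian, and since square-Gaussian random variables form a linear space, $\eta=\widehat{Z}_{T,\Delta}(\tau)/\sqrt{T}$ is itself a centred square-Gaussian random variable (the quantity $\Pb\{\eta\ge x\sqrt{\Var\eta}\}$ being, moreover, invariant under rescaling of $\eta$). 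The first displayed inequality is exactly $\Pb\{\eta\ge x\sqrt{\Var\eta}\}\le K(x)$; the second one is the same estimate applied to $-\eta$, which is again centred square-Gaussian with $\Var(-\eta)=\Var\eta$; the third one follows from the first two by the union bound. So it suffices to prove $\Pb\{\eta\ge x\sqrt{\Var\eta}\}\le K(x)$ for a centred square-Gaussian $\eta$ and all $x>0$.

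For this I would invoke the exponential-moment estimate for centred square-Gaussian random variables — the same tool that underlies \eqref{f14}, see (Buldygin and Kozachenko 2000, Chapter~6) and (Kozachenko et al.\ 2017, Chapter~6): for $0\le\lambda<(2\Var\eta)^{-1/2}$,
\[
\E\exp(\lambda\eta)\le\big(1-\lambda\sqrt{2\Var\eta}\,\big)^{-1/2}\exp\!\big(-\lambda\sqrt{\Var\eta/2}\,\big).
\]
(Its proof diagonalises the underlying bilinear Gaussian form, reducing $\eta$ to $\sum_k a_k(\zeta_k^2-1)$ with i.i.d.\ standard normal $\zeta_k$ and $\sum_k a_k^2<\infty$; expanding the logarithm and using $\sum_k a_k^n\le\big(\sum_k a_k^2\big)^{n/2}$ for every $n\ge2$ bounds the product form $\prod_k(1-2\lambda a_k)^{-1/2}e^{-\lambda a_k}$ of the moment generating function by its value at the single weight $\big(\sum_k a_k^2\big)^{1/2}$, which is the right-hand side above.) Now apply the exponential Chebyshev inequality to $e^{\lambda\eta}$:
\[
\Pb\big\{\eta\ge x\sqrt{\Var\eta}\,\big\}\le\inf_{0\le\lambda<(2\Var\eta)^{-1/2}}\exp\!\Big(-\lambda x\sqrt{\Var\eta}-\lambda\sqrt{\Var\eta/2}-\tfrac12\log\big(1-\lambda\sqrt{2\Var\eta}\,\big)\Big).
\]
An elementary one-variable minimisation of the exponent gives the value $-x/\sqrt2+\tfrac12\log(1+\sqrt2\,x)$, whence $\Pb\{\eta\ge x\sqrt{\Var\eta}\}\le(1+\sqrt2\,x)^{1/2}\exp(-x/\sqrt2)=K(x)$, as required.

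It remains to see that $\Var\widehat{H}_{T,\Delta}(\tau)=\E|\eta|^2>0$; this is where one must be a little careful, though it is not deep. The variable $\eta$ is a double Wiener integral, i.e.\ a nonzero centred element of the second chaos generated by $W$: its kernel $(s,u)\mapsto\frac1{cT}\int_0^T H(t+\tau-s)\,g_\Delta(t-u)\,dt$ does not vanish a.e., since otherwise, taking Fourier transforms, one of $H^\ast$, $g_\Delta^\ast$ would vanish a.e., which is excluded by $H\not\equiv0$ and \eqref{f1d}; and a nonzero element of a fixed chaos has strictly positive variance. (Alternatively, positivity can be read off \eqref{f5} taken at $\tau_1=\tau_2=\tau$.) The main obstacle in the argument is really only the square-Gaussian exponential-moment estimate: the sharp factor $(1+\sqrt2\,x)^{1/2}$ in $K(x)$ comes from the gamma-type factor $(1-\cdot)^{-1/2}$ in the moment generating function and cannot be recovered from the bare $\textrm{Exp}_{(1)}$-norm bound $\Pb\{|\eta|>x\}\le2e^{-x/\|\eta\|_{\textrm{Exp}_{(1)}}}$; once that refined estimate is in hand, the Chernoff optimisation and the remaining bookkeeping are routine.
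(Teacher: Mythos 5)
Your proposal is correct and follows essentially the same route as the paper: an exponential-moment bound for the centred square-Gaussian variable (which the paper simply quotes from Lemma 6.3.1 and Theorem 6.2.2 of Buldygin and Kozachenko 2000, with $s=\lambda\sqrt{2\Var\eta}$ matching your parameterisation), followed by Chebyshev--Markov, minimisation over the free parameter to get $K(x)$, the same argument for $-\eta$, and the union bound for the two-sided inequality. Your extra sketch of the moment bound and the positivity of the variance are fine but not needed, since the paper cites the former and takes the latter as part of the statement.
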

\begin{proof}
By Lemma 6.3.1 and by Theorem 6.2.2 in (Buldygin and Kozachenko 2000) applied to the square-Gaussian process $\widehat{H}_{T,\Delta}-\E \widehat{H}_{T,\Delta}$,  the following inequality holds for $|s|<1$:
$$
\E\exp{\Bigg\{\frac{s}{\sqrt{2}}\Bigg(\frac{\widehat{H}_{T,\Delta}(\tau)-\E\widehat{H}_{T,\Delta}(\tau)}{\sqrt{\Var\widehat{H}_{T,\Delta}(\tau)}}\Bigg)\Bigg\}}\leq\frac{1}{\sqrt{1-|s|}}\exp{\Big(-|s|/2\Big)}.
$$
This formula combined with the Chebyshev-Markov inequality implies that
$$
\Pb\Bigg\{\frac{\widehat{H}_{T,\Delta}(\tau)-\E\widehat{H}_{T,\Delta}(\tau)}{\sqrt{\Var\widehat{H}_{T,\Delta}(\tau)}}\geq x\Bigg\}\leq\frac{1}{\sqrt{1-s}}\exp{\Big(-\frac{s}{2}-\frac{s}{\sqrt{2}}\:x\Big)}
$$
for any $s\in (0,1)$ and any $x>0$. The first inequality in (\ref{fpbZ1}) is obtained by minimising the right-hand side of the above inequality with respect to $s\in (0,1)$. The second inequality is proved along similar lines, and the third one is a corollary of the first two inequalities. 
\end{proof}

\begin{rem}\label{rem_ZTDelta}
The role of the parameters $T$ and $\Delta$ in construction of confidence intervals is as follows:
 \begin{itemize}
   \item $T$ is responsible for proving CLT and determines the number of approximations; confidence intervals are constructed based on this parameter only.
   \item $\Delta$ is required in order to make $\widehat{H}_{T,\Delta}$ asymptotically unbiased; it appears in the structure of all characteristics related to pseudometrics and entropies.
 \end{itemize}
  The balance between $T$ and $\Delta$ will emerge in the study of error term's behaviour. \hfill$\triangleleft$
\end{rem}

Take $\tau\in\R$. If the function $\Var\widehat{H}_{T,\Delta} (\tau),$ $T>0,$ $\Delta>0,$ is known, or if one disposes of an upper bound on this function, then (\ref{fpbZ1}) enables us to obtain interval estimates for $\E \widehat{H}_{T,\Delta}(\tau).$ Note that $\Var\widehat{H}_{T,\Delta}(\tau)=T^{-1}\E|\widehat{Z}_{T,\Delta}(\tau)|^2,$ $T>0$; see (\ref{f4}). Therefore, for all $\tau\in \R$ and for any $x>0$,
$$
\Pb \Bigg\{\Big|\widehat{H}_{T,\Delta}(\tau)-\E\widehat{H}_{T,\Delta}(\tau)\Big|\geq x\:\sqrt{\E |\widehat{Z}_{T,\Delta}(\tau)|^2}\Bigg\}\leq 2K(\sqrt{T}x), \qquad T>0,\quad \Delta>0,
$$
where the function $K$ is as defined in Theorem~\ref{thm3}.

The next theorem can be employed in constructing better-tuned correlogram-based tests for hypotheses related to the form of the cross-correlation function between stationary Gaussian processes.
For this purpose, we will need some notions from Section 3. Recall that for all $T>0$ and $\Delta>0$ the mean-square deviation $\rho_{(T,\Delta)}$ is given by (\ref{F:rho_TDelta}).
Further, let ${\NN}_{\rho_{(T,\Delta)}}(\varepsilon)$ and ${\HH}_{\rho_{(T,\Delta)}}(\varepsilon), \varepsilon>0,$ be the metric massiveness and the metric entropy, correspondingly, of $[a,b]\subset \R$ with respect to $\rho_{(T,\Delta)}$. Choose an arbitrary number $r\in (0,1)$ and set\quad
$
\varepsilon_{T,\Delta}=\left(C_r/\ln2\right)^{1/2} \cdot \Sup_{\tau_1,\tau_2 \in [a,b]} \rho_{(T,\Delta)}(\tau_{1},\tau_{2}),
$\quad
where $C_r=r^{-2}|\ln{(1-r)}|-r^{-1}.$

\begin{thm}\label{thm4}
For any $[a,b]\subset\R$, assume that the following inequality holds\/{\rm :}
\begin{equation}\label{fZTD}
\Int_{0}^{\varepsilon_{T,\Delta}}\ln{(1+{\NN}_{\rho_{(T,\Delta)}}(\varepsilon))}d\varepsilon<\infty.
\end{equation}
Then, for any $x>0$, we have
\begin{equation}\label{fpbZTD}
\emph{\Pb}\Bigg\{\sqrt{T}\Sup_{\tau\in[a,b]}\big|\widehat{H}_{T,\Delta}(\tau)-\emph{\E} \widehat{H}_{T,\Delta}(\tau)\big|\geq x\Bigg\}\leq 2\exp{\Big(-\frac{x}{A_{T,\Delta}}\Big)},
\end{equation}
where
$$
A_{T,\Delta}=\Bigg(\frac{C_r}{\ln2}\Bigg)^{1/2} \Inf_{\tau\in[a,b]}\Big(\emph{\E} |\widehat{Z}_{T,\Delta}(\tau)|^2\Big)^{1/2}+
\Inf_{\theta\in\Theta_{T,\Delta}}\frac{e^2}{\theta(1-\theta)}\Int_{0}^{\theta\varepsilon_{T,\Delta}}\ln{\Bigg(1+{\NN}_{\rho_{(T,\Delta)}}\Big(\varepsilon \Big(\frac{\ln{2}}{C_r}\Big)^{1/2}\Big)\Bigg)}d\varepsilon,
$$
and $\Theta_{T,\Delta}=\{0<\theta<1:\ {\NN}_{\rho_{(T,\Delta)}}(\theta\varepsilon_{T,\Delta})>e^2-1\}$.
\end{thm}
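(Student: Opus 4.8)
The plan is to identify the supremum appearing in (\ref{fpbZTD}) with the supremum of the centred square-Gaussian process $\widehat{Z}_{T,\Delta}$ and then to invoke the standard entropy-based bound on the distribution of the supremum of such a process, the constants $C_r$, $\varepsilon_{T,\Delta}$ and $\Theta_{T,\Delta}$ being inherited verbatim from that abstract result. First I would rewrite the event: by (\ref{f4}) we have $\sqrt{T}\big[\widehat{H}_{T,\Delta}(\tau)-\E\widehat{H}_{T,\Delta}(\tau)\big]=\widehat{Z}_{T,\Delta}(\tau)$, so the left-hand side of (\ref{fpbZTD}) equals $\Pb\{\Sup_{\tau\in[a,b]}|\widehat{Z}_{T,\Delta}(\tau)|\ge x\}$. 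By Remark~\ref{rem3}, $\widehat{Z}_{T,\Delta}$ is a centred square-Gaussian process, hence an $\textrm{Exp}_{(1)}$-process; by the standing assumptions on $Y$ and $X_{\Delta}$ it is separable and a.s. sample continuous on $\R$ (cf. Remark~\ref{rem2}). The pseudometric $\rho_{(T,\Delta)}$ of (\ref{F:rho_TDelta}) is, by construction, the incremental standard-deviation pseudometric of this process; by Theorem~6.2.2 in (Buldygin and Kozachenko 2000) — the same ingredient used in (\ref{f14}) and in the proof of Theorem~\ref{thm3} — the normalised increments of $\widehat{Z}_{T,\Delta}$ have a uniformly bounded exponential moment of order one, and $\big(\E|\widehat{Z}_{T,\Delta}(\tau)|^2\big)^{1/2}$ dominates the $\textrm{Exp}_{(1)}$-norm of $\widehat{Z}_{T,\Delta}(\tau)$ up to an explicit constant. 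Finally, Lemma~\ref{lem1} gives that $\rho_{(T,\Delta)}$ is bounded on $\R$ and continuous with respect to the uniform metric $d$, so $[a,b]$ is $\rho_{(T,\Delta)}$-compact, $\NN_{\rho_{(T,\Delta)}}(\varepsilon)<\infty$ for every $\varepsilon>0$, and $\Sup_{\tau_1,\tau_2\in[a,b]}\rho_{(T,\Delta)}(\tau_1,\tau_2)<\infty$; consequently $\varepsilon_{T,\Delta}$ and $\Theta_{T,\Delta}$ are well defined.

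Next I would apply the estimate for the distribution of the supremum of a separable square-Gaussian process, see (Buldygin and Kozachenko 2000, Chapters 4 and 6), (Kozachenko and Rozora 2015) and (Kozachenko et al. 2017, Chapter 6), to $\widehat{Z}_{T,\Delta}$ over $[a,b]$ equipped with $\rho_{(T,\Delta)}$. That theorem states precisely that, if the entropy integral $\int_0^{\varepsilon_{T,\Delta}}\ln(1+\NN_{\rho_{(T,\Delta)}}(\varepsilon))\,d\varepsilon$ is finite — which is assumption (\ref{fZTD}) — then $\widehat{Z}_{T,\Delta}$ is a.s. bounded on $[a,b]$ and, for every $x>0$, $\Pb\{\Sup_{\tau\in[a,b]}|\widehat{Z}_{T,\Delta}(\tau)|\ge x\}\le 2\exp(-x/A_{T,\Delta})$. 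The constant $A_{T,\Delta}$ then has exactly the two-term shape written in the statement: the summand $(C_r/\ln2)^{1/2}\Inf_{\tau\in[a,b]}\big(\E|\widehat{Z}_{T,\Delta}(\tau)|^2\big)^{1/2}$ bounds the $\textrm{Exp}_{(1)}$-norm of the value of the process at an optimally chosen reference point, while $\Inf_{\theta\in\Theta_{T,\Delta}}\frac{e^2}{\theta(1-\theta)}\int_0^{\theta\varepsilon_{T,\Delta}}\ln(1+\NN_{\rho_{(T,\Delta)}}(\varepsilon(\ln2/C_r)^{1/2}))\,d\varepsilon$ is the Dudley-type chaining term produced by summing increments along a geometric hierarchy of $\rho_{(T,\Delta)}$-nets, with the ratio governed by $r$ and the starting scale by $\theta$, and then optimising; the set $\Theta_{T,\Delta}$ singles out those $\theta$ for which the net at scale $\theta\varepsilon_{T,\Delta}$ is non-degenerate. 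Substituting back $\widehat{Z}_{T,\Delta}=\sqrt{T}\,(\widehat{H}_{T,\Delta}-\E\widehat{H}_{T,\Delta})$ gives (\ref{fpbZTD}).

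The main obstacle, and the only place where care is really needed, is the bookkeeping of constants: one must check that the normalisation $\varepsilon_{T,\Delta}=(C_r/\ln2)^{1/2}\Sup_{\tau_1,\tau_2\in[a,b]}\rho_{(T,\Delta)}(\tau_1,\tau_2)$, the rescaling $\varepsilon\mapsto\varepsilon(\ln2/C_r)^{1/2}$ inside the entropy integral, and the admissible range $\Theta_{T,\Delta}$ match the hypotheses of the cited theorem exactly, and that $C_r=r^{-2}|\ln(1-r)|-r^{-1}$ is precisely the quantity generated by the bound $(1-r)^{-1/2}e^{-r/2}$ on the exponential moment used in the chaining step. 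No analytic input beyond (\ref{f14}) and Lemma~\ref{lem1} is needed, so the proof reduces to verifying these hypotheses and transcribing the conclusion; I do not anticipate a substantial difficulty beyond this alignment of constants.
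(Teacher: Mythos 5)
Your proposal follows essentially the same route as the paper: rewrite the event as a tail of $\sup_{\tau\in[a,b]}|\widehat{Z}_{T,\Delta}(\tau)|$, use the square-Gaussian ($\textrm{Exp}_{(1)}$) property to bound the $\textrm{Exp}_{(1)}$-norm of increments by $\big(C_r/\ln 2\big)^{1/2}\rho_{(T,\Delta)}(\tau_1,\tau_2)$, and then invoke the general entropy-based estimate for the distribution of the supremum of an $\textrm{Exp}_{(1)}$-process under assumption (\ref{fZTD}), transcribing the two-term constant $A_{T,\Delta}$. The only discrepancy is bibliographic: the paper obtains the increment bound from Theorem 6.2.3 (not 6.2.2) of Buldygin and Kozachenko (2000) and the supremum bound from their Theorem 3.3.4, which is precisely the alignment of constants you flag as the remaining check.
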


\begin{proof} The proof is based on the fact that $\widehat{Z}_{T,\Delta}$ is a centered square-Gaussian process, being therefore an $\textrm{Exp}_{(1)}$-process. By Theorem 6.2.3 in (Buldygin and Kozachenko 2000),  for all $\tau_1, \tau_2 \in [a,b]$:
$$
\|\widehat{Z}_{T,\Delta}(\tau_2)-\widehat{Z}_{T,\Delta}(\tau_1)\|_{\textrm{Exp}_{(1)}}\leq \Big(\frac{C_r}{\ln{2}}\Big)^{1/2} \sqrt{\Var\big(\widehat{Z}_{T,\Delta}(\tau_2)-\widehat{Z}_{T,\Delta}(\tau_1)\big)}= \Big(\frac{C_r}{\ln{2}}\Big)^{1/2}\rho_{(T,\Delta)}(\tau_1,\tau_2).
$$
Formula (\ref{fpbZTD}) is obtained from the latter upper bound by application of Theorem 3.3.4 in (Buldygin and Kozachenko 2000) to the distribution of the supremum of an $\textrm{Exp}_{(1)}$-process.
\end{proof}

\begin{rem}\label{remZTD1}
For a given $[a,b]\subset\R,$ condition (\ref{fZTD}) in Theorem \ref{thm4} may be replaced by (\ref{f13}). 
 Inequality (\ref{f13}) appears in the proof of Statement\/ {\rm (II)} in Theorem \ref{thm1} dealing with a.~s. continuity of $\widehat{Z}_{T,\Delta}$. \hfill$\triangleleft$
\end{rem}

Observe that, in the class of $\textrm{Exp}_{(1)}$-processes, the rate of decay as $x\to\infty$  appearing on the right-hand side of (\ref{fpbZTD}) is correct. The constant $A_{T,\Delta}$ is overestimated which is typical when entropy methods are applied even to Gaussian processes. The explicit formula for $A_{T,\Delta}$ seems to be rather complex; this quantity may admit simpler assessments in particular cases.

\begin{rem}\label{remZTD2}
Lemma \ref{lem1} implies an upper bound, uniform in $T>0$ and $\Delta>0$, on $\rho_{(T,\Delta)}$:
 $$
 \Sup_{T,\Delta>0}\Sup_{\tau_1,\tau_2\in\R}\rho_{(T,\Delta)}(\tau_1,\tau_2)\leq \frac{2}{c}\:\|H\|_{2}\Big(\Sup_{\Delta>0}\|g^*_{\Delta}\|_{\infty}\Big)
 $$
leading to the inequality
$$
\varepsilon_{T,\Delta}\leq\frac{2}{c}\:\|H\|_{2}\cdot\Big(\Sup_{\Delta>0}\|g^*_{\Delta}\|_{\infty}\Big)\cdot\Bigg(\frac{C_r}{\ln2}\Bigg)^{1/2}.
$$
This enables us to replace $A_{T,\Delta}$ by a constant which is somewhat greater but does not on $T$ or $\Delta$. \hfill$\triangleleft$
\end{rem}

\section{Examples of IRF components}

 \begin{figure}[!h]
\begin{center}
\includegraphics[height=3.7cm]{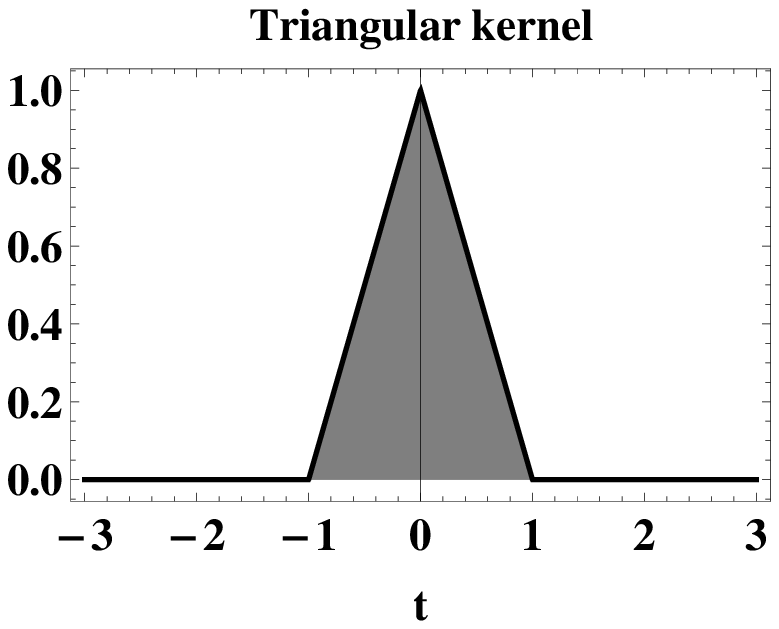}\includegraphics[height=3.7cm]{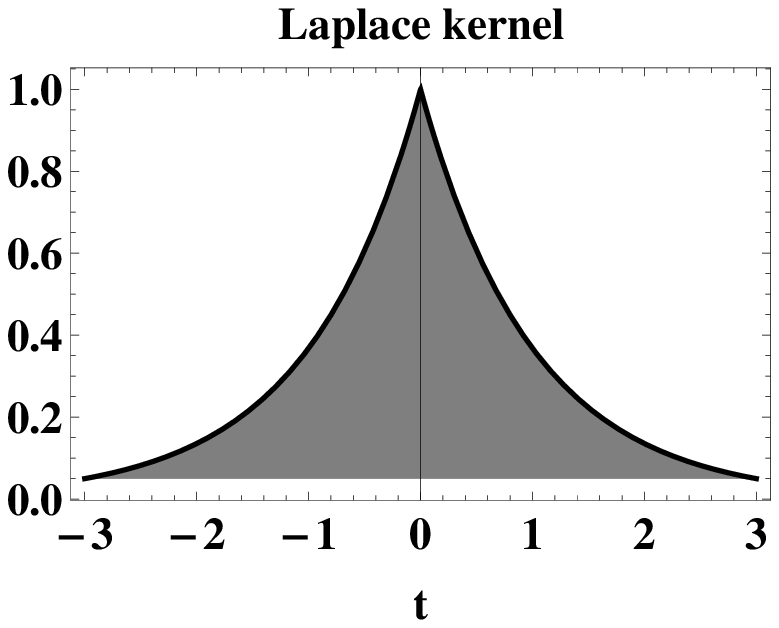}\includegraphics[height=3.7cm]{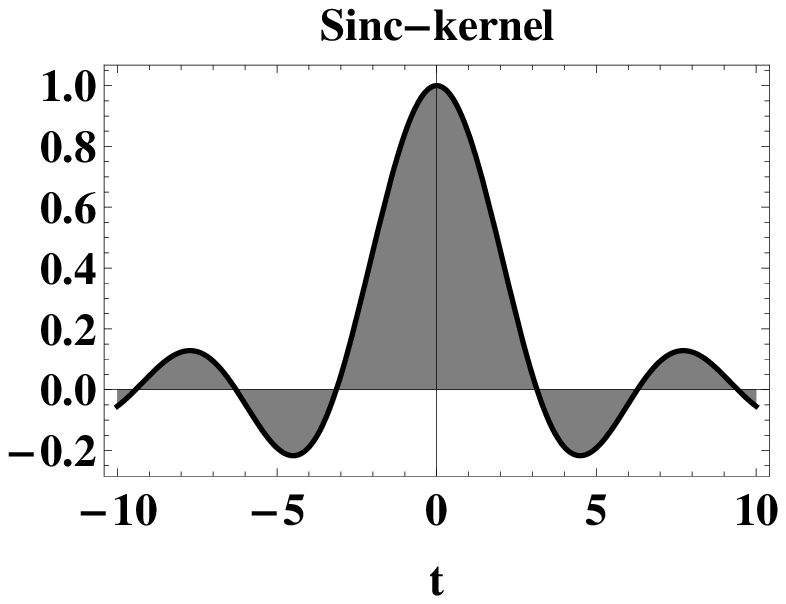}
\end{center}
 \caption{Triangular kernel, Laplace kernel and sinc-kernel.}\label{Fi:3_kernels}
 \end{figure}

We would like to give important particular cases of IRF's $L_2$-integrable components $g_{\Delta}$ and $H$. Three kernels shown in Figure~\ref{Fi:3_kernels} are chosen under the following motivation:
 \begin{itemize}
   \item the triangular kernel is well-known in signal processing as the Bartlett window;
   \item the Laplace kernel is often used for approximation of characteristic functions in harmonic analysis;
   \item the sinc-kernel is an $L_2$-integrable sign-alternating function which appears in (medical) image processing.
 \end{itemize}
For simplicity, we replace convergence of entropy integrals by sufficient conditions that depend on boundedness of weighted integrals constructed with respect to spectral measures only.

Note that the fact that the outputs $X_{\Delta}$ and $Y$ are a.~s. continuous means that $\widehat{H}_{T,\Delta}$ and $\widehat{Z}_{T,\Delta}$ are well-defined as elements of $\Ceabe$; see (\ref{f3}) and (\ref{f4}), respectively. Remark \ref{rem5} is used in checking the functional CLT. In what follows, $\I_{A}(\cdot)$ stands for the indicator function of a set $A\subset \R$.


\subsection{Examples of the controlled component $g_{\Delta}$}\label{subsection3}
By the Fernique theorem (Fernique 1975), a necessary condition for the stationary Gaussian process $X_{\Delta}$ to be a.~s. continuous is that the Dudley integral of $X_\Delta$ be finite: $\Int_{0+}{\HH}_{d_{X_{\Delta}}}^{1/2}(\varepsilon)d\varepsilon<\infty.$ This integral may be bounded from above in terms of the spectral density or the correlation function. In our framework, we will use the already mentioned Hunt condition (e.g., (Cram\'er and Leadbetter 1967, 196)) stating that if for some $\beta>0$ the following integral is finite:
\begin{equation}\label{hunt}
\Int_{-\infty}^{\infty}|g^*_\Delta(\lambda)|^2\ln^{1+\beta}{(1+|\lambda|)}d\lambda<\infty,
\end{equation}
then the process $X_{\Delta}$ is a.~s. continuous on $\R$.

We give an account of functions $g_{\Delta}$ satisfying all above assumptions and appearing in a wide range of engineering problems. Next examples represent the scaled classic window functions with bounded (Example \ref{ex_1}) and unbounded (Example \ref{ex_2}) supports. All simulations of $X_{\Delta}$ which are Wiener shot noise processes were made in Wolfram Mathematica 10.4. For more details on similar kernels and their modelling, see (Roberts at el. 2013) and (Prabhu 2014, Chapter 3).

\begin{expl}[\textbf{Triangular kernel, or the Bartlett window}]\label{ex_1}
For any $\Delta>0$, put
$$
g_{\Delta}(t)=c\Delta\Big(1-\Delta|t|\Big)\I_{[-1,1]}(\Delta t), \quad t\in\R.
$$


\paragraph{Framework conditions.} This is a non-negative even function; its Fourier transform is as follows:
$$
g_{\Delta}^{*}(\lambda)=c\Bigg(\frac{\sin{\big(\frac{\lambda}{2\Delta}\big)}}{\frac{\lambda}{2\Delta}}\Bigg)^2, \quad \lambda\in\R.
$$
It is clear that $\{g_{\Delta},g_{\Delta}^{*}\}\subset L_{p}(\R), p\in [1,\infty]$, and therefore conditions (\ref{f1a})--(\ref{f1c}) are satisfied.
Since $|u^2-v^2|\leq|u-v|\cdot(|u|+|v|)$ and since $|\sin{u}|<|u|,$  we have for some $c>0$ and for any $a>0$
 \begin{eqnarray*}
   \lefteqn{\Lim_{\Delta\to\infty}\Sup_{|\lambda|\leq a}|g^*(\lambda)-c|=c\cdot\Lim_{\Delta\to\infty}\Sup_{|\lambda|\leq a}\Bigg|
\Bigg(\frac{\sin{\big(\frac{\lambda}{2\Delta}\big)}}{\frac{\lambda}{2\Delta}}\Bigg)^2-1\Bigg|\leq
2c\cdot\Lim_{\Delta\to\infty}\Sup_{|\lambda|\leq a}\Big|\frac{\sin{\frac{\lambda}{2\Delta}}-\frac{\lambda}{2\Delta}}{\frac{\lambda}{2\Delta}}\Big|=}\\[2mm]
  &=&
    2c\cdot\Lim_{\Delta\to\infty}\Sup_{|\lambda|\leq a}\Bigg|\frac{\lambda^2}{(2\Delta)^2}\Sum_{k=0}^{\infty}\frac{(-1)^{k}}{(2k+3)!}\Big(\frac{\lambda}{2\Delta}\Big)^{2k}\Bigg|\leq\frac{2c}{3!}\cdot\Lim_{\Delta\to\infty}\Sup_{|\lambda|\leq a}\Bigg|\frac{\lambda^2}{(2\Delta)^2}\Sum_{k=0}^{\infty}\Big(\frac{\lambda}{2\Delta}\Big)^{2k}\Bigg|=\\[2mm]
   &=&
     \frac{c}{3}\cdot\Lim_{\Delta\to\infty}\Sup_{|\lambda|\leq a}\Bigg|\frac{\lambda^2}{(2\Delta)^2}\frac{1}{1-\Big(\frac{\lambda}{2\Delta}\Big)^{2}}\Bigg|=\frac{c}{3}\cdot\Lim_{\Delta\to\infty}\Sup_{|\lambda|\leq a}\frac{\lambda^2}{|(2\Delta)^2-\lambda^2|}\leq \frac{c a^2}{12}\Lim_{\Delta\to\infty}\frac{1}{\Delta^2}=0.
 \end{eqnarray*}
In this way, condition (\ref{f1d}) also holds. The family $(g_{\Delta},\ \Delta>0)$ approximates the Dirac delta as $\Delta\to \infty$.


\paragraph{The structure of $X_{\Delta}$.} The output $X_{\Delta}$ is generated by linear filtration of a standard Wiener process:
$$
X_{\Delta}(t)=c\Delta\Int_{-\infty}^{\infty}\Big(1-\Delta|t-s|\Big)\I_{\{\Delta|t-s|\leq 1\}}(s)d W(s), \quad t\in\R.
$$
   It is Gaussian stationary zero-mean process; its spectral density is $f_{X_{\Delta}}(\lambda)=\frac{1}{2\pi}|g^*_\Delta(\lambda)|^2, \lambda\in\R.$ The $\delta$-like structure of $g_\Delta$ implies that the family $X_{\Delta}$ converges to a Gaussian white noise as $\Delta\to\infty$. This property can be traced down in Figure \ref{Fi:Ex1_simulation} where trajectories of $X_{\Delta}$ are simulated with increasing values of $\Delta=1, 10, 100, 1000$.

 \begin{figure}[!h]
\begin{center}
\includegraphics[height=5.4cm]{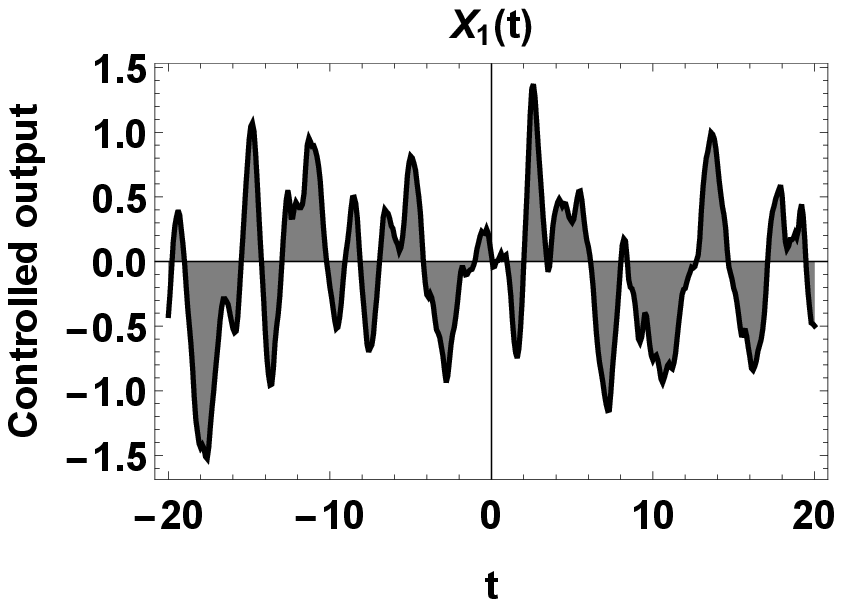}\ \ \ \ \ \ \includegraphics[height=5.4cm]{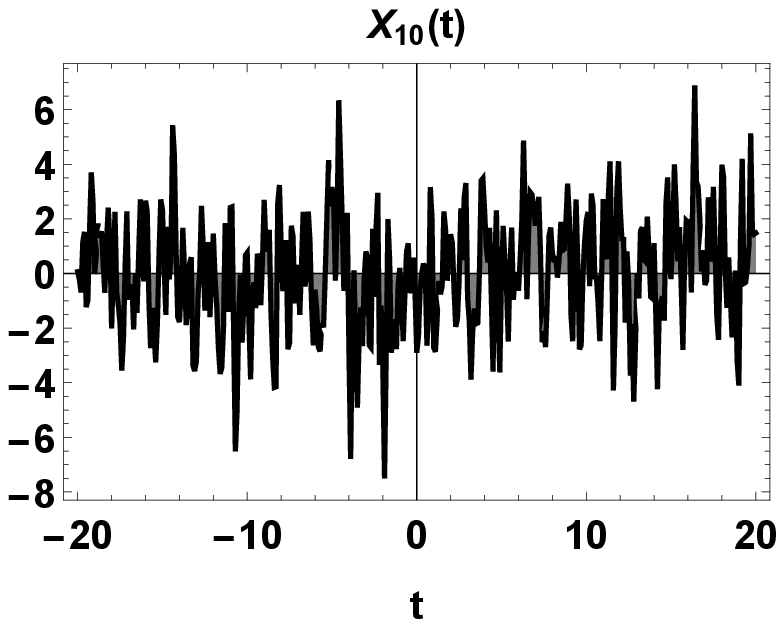}
\includegraphics[height=5.4cm]{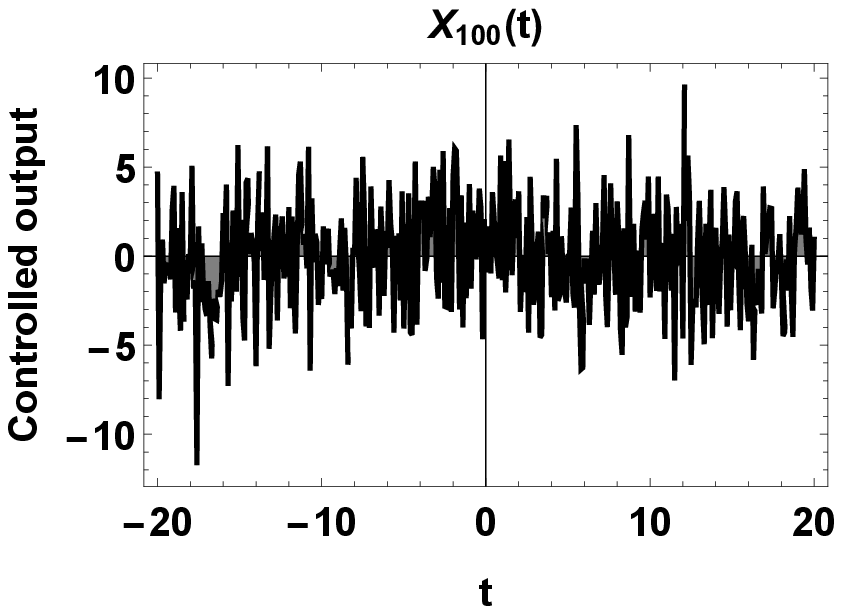}\ \ \ \ \ \ \includegraphics[height=5.4cm]{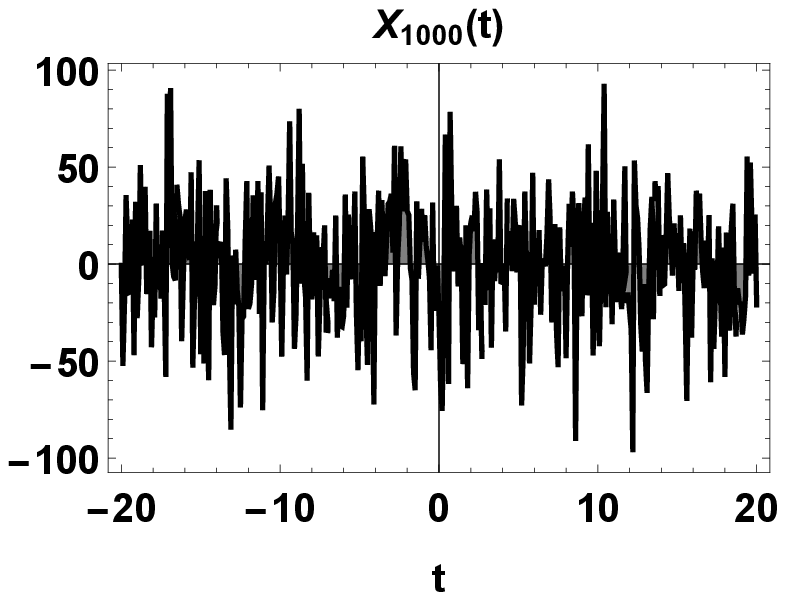}
\end{center}
 \caption{Outputs $X_{\Delta}$ for the triangular kernel $g_{\Delta}$ simulated with $\Delta=1,10,100,1000$; $c=1$.}\label{Fi:Ex1_simulation}
 \end{figure}


\paragraph{Continuity of $X_{\Delta}$.} For any $\Delta>0$ and some $\beta>0$, we have
\begin{eqnarray*}
 \lefteqn{\Int_{-\infty}^{\infty}|g^*_\Delta(\lambda)|^2\ln^{1+\beta}{(1+|\lambda|)}\dd\lambda=c^2\Int_{-\infty}^{\infty}\Bigg(\frac{\sin{\big(\frac{\lambda}{2\Delta}\big)}}{\frac{\lambda}{2\Delta}}\Bigg)^4\ln^{1+\beta}{(1+|\lambda|)}\dd\lambda\asymp}\\[2mm]
 && \asymp2c^2\Big[\Int_{0}^{1}\ln^{1+\beta}{(1+|\lambda|)}\dd\lambda+(-1)^{1+\beta}(2\Delta)^2\Int_{1}^{\infty}\Big(\frac{1}{1+\lambda}\Big)^2\ln^{1+\beta}{\Big(\frac{1}{1+\lambda}\Big)}\dd\lambda\Big]<\infty.
\end{eqnarray*}
The latter implies condition (\ref{hunt}); that is, the controlled process $X_{\Delta}, \Delta>0$ is a.~s. continuous on $\R$.


\paragraph{Engineering interpretation.} The kernel $g_{\Delta}$ is defined as a scaled Bartlett window function. This function often appears in problems of linear interpolation and is widely used in signal processing.
\hfill$\Diamond$
\end{expl}

\begin{expl}[\textbf{Exponential kernel, or the Laplace window}]\label{ex_2}
For any $\Delta>0$, let
$$
g_{\Delta}(t)=\frac{c\Delta}{2}\exp{(-\Delta|t|)}, \quad t\in\R.
$$


\paragraph{Framework conditions.} This function is even and non-negative; its Fourier transform is as follows:
$$
g_{\Delta}^{*}(\lambda)=\frac{c\Delta^2}{\Delta^2+\lambda^2}, \quad \lambda\in\R.
$$
It is clear that $\{g_{\Delta},g_{\Delta}^{*}\}\subset L_{p}(\R),$ $p\in [1,\infty]$, and therefore conditions (\ref{f1a})--(\ref{f1c}) hold.
For any $a>0$
$$
\Lim_{\Delta\to\infty}\Sup_{|\lambda|\leq a}|g^*(\lambda)-c|=c\cdot\Lim_{\Delta\to\infty}\Sup_{|\lambda|\leq a}\frac{\lambda^2}{\Delta^2+\lambda^2}=c\cdot\Lim_{\Delta\to\infty}\frac{a^2}{\Delta^2}=0
$$
implying (\ref{f1d}). The family $(g_{\Delta}, \Delta>0)$ satisfies (\ref{f1a})--(\ref{f1d}) and approximates the Dirac delta as $\Delta\to \infty$.


\paragraph{The structure of $X_{\Delta}$.} The output $X_{\Delta}$ is generated by linear filtration of a standard Wiener process:
$$
X_{\Delta}(t)=\frac{c\Delta}{2}\Int_{-\infty}^{\infty}\exp{(-\Delta|t-s|)}\dd W(s), \quad t\in\R.
$$
In some sense, the last formula is a generalised solution to a Langevin equation describing particle's velocity in a fluid with unlimited time. It is the well-known Ornstein-Uhlenbeck process on $\R$. 
That is, we deal with a Gaussian stationary zero-mean process with spectral density $f_{X_{\Delta}}(\lambda)=\frac{1}{2\pi}|g^*_\Delta(\lambda)|^2,$ $\lambda\in\R.$ As we already know, the family $X_{\Delta}$ converges to a Gaussian white noise as $\Delta\to\infty$. This fact can be observed in Figure \ref{Fi:Ex2_simulation} where trajectories of $X_{\Delta}$ are simulated with increasing values of $\Delta=1,10,100,1000$.

 \begin{figure}[!h]
\begin{center}
\includegraphics[height=5.4cm]{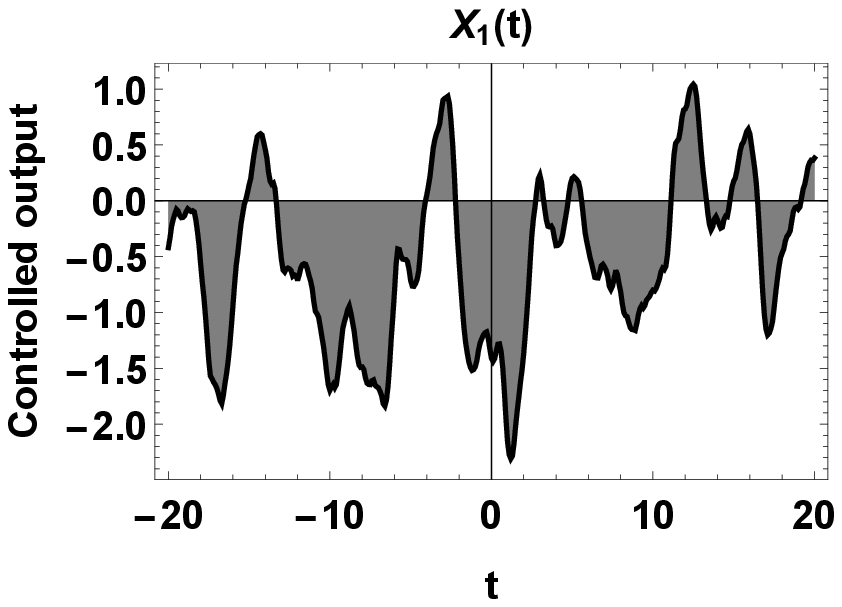}\ \ \ \ \ \ \includegraphics[height=5.4cm]{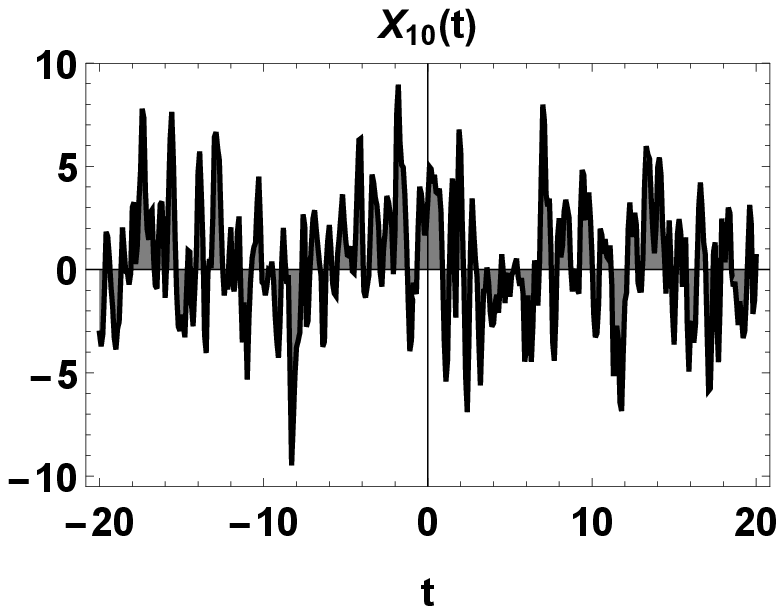}
\includegraphics[height=5.4cm]{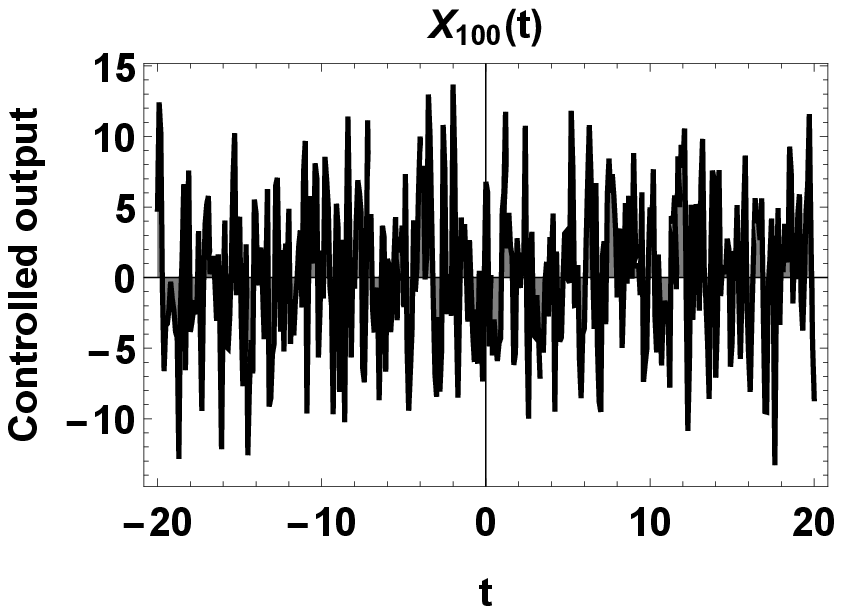}\ \ \ \ \ \includegraphics[height=5.4cm]{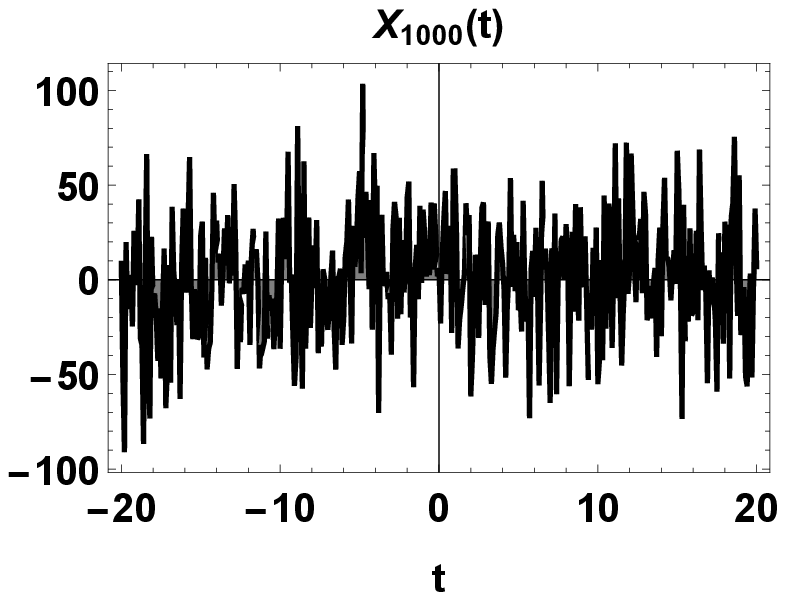}
\end{center}
 \caption{Outputs $X_{\Delta}$ for the exponential kernel $g_{\Delta}$ simulated with $\Delta=1,10,100,1000$; $c=2$.}\label{Fi:Ex2_simulation}
 \end{figure}

\noindent \textbf{Continuity of $X_{\Delta}$.} For any $\Delta>0$ and some $\beta \in(0,2)$, we have
$$
\Int_{-\infty}^{\infty}|g^*_\Delta(\lambda)|^2\ln^{1+\beta}{(1+|\lambda|)}\dd\lambda=\Int_{-\infty}^{\infty}\Big(\frac{c\Delta^2}{\Delta^2+\lambda^2}\Big)^2\ln^{1+\beta}{(1+|\lambda|)}\dd\lambda \leq 2c^2\Big[(\ln{2})^{4+\beta}+\Int_{1}^{\infty}\frac{\dd\lambda}{\lambda^{3-\beta}}\Big]<\infty.
$$
 The latter implies (\ref{hunt}); that is, the controlled output $X_{\Delta}$ is a.~s. continuous on $\R$.


\paragraph{Probability interpretation.} The kernel $g_{\Delta}$ is defined as a normalised (by $c$) probability density function of the Laplace distribution located at zero and whose scale parameter is $\Delta>0$. The component $g_{\Delta}$ may also be interpreted as a normalised (by $c\Delta/2$) characteristic function of the standard Cauchy distribution with scale parameter $\Delta$. Let us only say that the type of scaling is non-random and is related to harmonic analysis and convolutions involving distribution densities. In some sense, our study uses the inverse Fourier transform for convolutions without assuming absolute integrability of kernels. \hfill$\Diamond$
\end{expl}

\subsection{Examples of the unknown component $H$}\label{subsection4}
Recall that, in our framework, the component $H$ appears in the functional CLT, in the structure of entropy characteristics and in the construction of confidence intervals. To be more precise:
\begin{itemize}
  \item $H\in L_2(\R)$ which is sufficient for CLT of the centred estimator;
  \item for some $\beta>0$, we have:
  \begin{equation}\label{CLT}
  \Int\limits_{-\infty}^{\infty}|H^{*}(\lambda)|^{2}\log^{4+\beta}(1+|\lambda|)\,d\lambda<\infty
  \end{equation}
  which is sufficient for obtaining a CLT (Remark \ref{rem5} and Theorem \ref{thm1}) and for constructing confidence intervals in $C[a,b]$ (Theorem \ref{thm2} and Theorem \ref{thm3}).
  \end{itemize}
Further two examples of IRF kernels $H$ are useful in signal and image processing. These examples cover the cases of even and odd $H$; both functions are defined by continuity at zero.

\begin{expl}[\textbf{Sinc-kernel, or the Shannon scale function}]\label{ex_3}
Consider the so-called normalised sinc-function
$$
H(t)=\mathop{sinc}(t)= \frac{\sin \pi t}{\pi t}, \quad t\in \R.
$$
It is clear that $H\in L_2(\R)$; the Fourier--Plancherel transform of this function is well-known to be
$$
H^*(\lambda)=\Int_{-\infty}^{\infty}e^{-i\lambda t}\Big(\frac{\sin \pi t}{\pi t}\Big) dt = \I_{[-\pi,\pi]}(\lambda),\quad \lambda\in\R.
$$
It is clear that $H\in L_p(\R), p\in(1,\infty]$ and $H^* \in L_q(\R), q\in[1,\infty]$.
For any $\beta>0$, we have
$$
\Int_{-\infty}^{\infty}|H^{*}(\lambda)|^{2}\log^{4+\beta}(1+|\lambda|)\,d\lambda=2\Int_{0}^{\pi}\log^{4+\beta}(1+\lambda)\,d\lambda<\infty.
$$
Therefore $H$ satisfies all our assumptions and gives an example of an ideal low-pass filter.

Haar functions and sinc-functions are Fourier duals of each other; it is easy to make calculations in our case. Since $H$ is even, $H^*$ is real-valued: $H^*=\overline{H^*}$. Thus, the correlation function of $Y$ is as follows:
$$
\vspace{-2mm}
K_{Y}(t)=\E Y(t)Y(0)=\frac{1}{2\pi}\Int_{-\infty}^{\infty}e^{it\lambda}|H^*(\lambda)|^2\dd\lambda=\frac{\sin{(\pi t)}}{\pi t}=H(t),\quad t\in\R,
$$
and the correlation function of $Z$ has the form
 \begin{eqnarray*}
   C_{\infty}(\tau_1,\tau_2) &= & \E Z(\tau_1)Z(\tau_2)=\frac{1}{2\pi}\Int_{-\infty}^{\infty}\Big[e^{i(\tau_1-\tau_2)\lambda}|H^*(\lambda)|^2\dd\lambda+e^{i(\tau_1+\tau_2)\lambda}\big(H^*(\lambda)\big)^2\dd\lambda\Big]= \\[2mm]
   &=& K_Y(\tau_1-\tau_2)+K_Y(\tau_1+\tau_2)=H(\tau_1-\tau_2)+H(\tau_1+\tau_2),\quad \tau_1,\tau_2 \in\R.
 \end{eqnarray*}
These correlation functions of zero-mean Gaussian processes $Y$ and $Z$ are based on the only assumption that $H$ is $L_2$-integrable. The function $C_{\infty}$ appears to be the sum of two sinc-waves, one of which is responsible for stationarity (main diagonal) and the second wave ``controls'' non-stationarity (anti-diagonal). Recall also that in this case $\big(H\ast H\big)(\tau)=H(\tau),$ $\tau \in \R.$ \hfill$\Diamond$
\end{expl}

The function $H$ in Example \ref{ex_3}  is the scaling function for the Shannon MRA (signal analysis by ideal bandpass filters). The expression of a real-valued Shannon wavelet can be obtained by taking $\psi^{\textrm{\scriptsize Sha}}(t)=2H(2t-1)-H(t),$ $t\in \R.$ This wavelet belongs to the $C^{\infty}$-class, but it decreases slowly at infinity and has no bounded support. For a detailed information on Shannon wavelets and their applications, we refer the reader to the book (Najmi 2012, Chapters 4--5).

Our last example shows an odd $L_2$-integrable function which is a Hilbert transform of the sinc-kernel already mentioned in Example \ref{ex_3}. In some sense, we save the duality between Haar- and sinc-systems, and show how the fact that $H$ is odd affects the characteristics of $Y$ and $Z$.

\begin{expl}[\textbf{Hilbert transform of the Sinc-kernel}]\label{ex_4}
Take
$$
H(t)= \frac{1-\cos{(\pi t)}}{\pi t}, \quad t\in \R.
$$
Of course, one has $H\in L_2(\R)$; the Fourier--Plancherel transform of $H$ is as follows:
$$
H^*(\lambda)=\Int_{-\infty}^{\infty}e^{-i\lambda t}\Big(\frac{1-\cos{(\pi t)}}{\pi t}\Big) dt = i\cdot \mathop{sign}(\lambda)\cdot\I_{[-\pi,\pi]}(\lambda),\quad \lambda\in\R.
$$
It is clear that $H\in L_p(\R),$ $p\in(1,\infty]$ and $H^* \in L_q(\R), q\in[1,\infty].$
For any $\beta>0,$ we have
$$
\Int_{-\infty}^{\infty}|H^{*}(\lambda)|^{2}\log^{4+\beta}(1+|\lambda|)\,d\lambda=2\Int_{0}^{\pi}\log^{4+\beta}(1+\lambda)\,d\lambda<\infty.
$$
This means that $H$ satisfies all required assumptions. It is a Hilbert transform of an ideal low-pass filter.
The fact that $H$ is odd implies that $H^*$ is imaginary-valued and $H^*=-\overline{H^*}$. Then the correlation function of $Y$ is as follows:
$$
K_{Y}(t)=\E Y(t)Y(0)=\frac{1}{2\pi}\Int_{-\infty}^{\infty}e^{it\lambda}|H^*(\lambda)|^2\dd\lambda=\frac{\sin{(\pi t)}}{\pi t},\quad t\in\R,
$$
and the correlation function of $Z$ is:
 \begin{eqnarray*}
   C_{\infty}(\tau_1,\tau_2) &=&
    \E Z(\tau_1)Z(\tau_2)=\frac{1}{2\pi}\Int_{-\infty}^{\infty}\Big[e^{i(\tau_1-\tau_2)\lambda}|H^*(\lambda)|^2\dd\lambda+e^{i(\tau_1+\tau_2)\lambda}\big(H^*(\lambda)\big)^2\dd\lambda\Big]=\\[2mm]
   &=& K_Y(\tau_1-\tau_2)-K_Y(\tau_1+\tau_2)=H(\tau_1-\tau_2)-H(\tau_1+\tau_2),\quad \tau_1,\tau_2 \in\R.
 \end{eqnarray*}
Again the correlation functions of $Y$ and of $Z$ are constructed under the only restriction that $H$ is $L_2$-integrable. The correlation function of the non-stationary process $Z$ splits into two sinc-waves; its shape is quite interesting since $C_{\infty}(0,0)=0$. \hfill$\Diamond$
\end{expl}

It follows from Examples \ref{ex_3} and \ref{ex_4} that the correlation function of the separable Gaussian stationary zero-mean process $Y$ has the form: $K_{Y}(t)=\mathop{sinc}(t),$ $t\in\R.$ Then
$K_{Y}(t)=1-\pi^2t^2/3!+o(t^2)$, $t\to 0$,
and we can estimate the supremum of $Y$ using general results in (Cram\'er and Leadbetter 1967, Chapters 9--10), (Lifshits 1995, Chapters 12--14) or (Adler 2000, Chapter V).

\section{Concluding remarks}
We gave the principle of estimation the kernels of Wiener shot-noise processes. For this purpose, the LTI SIDO-models were investigated with the Wiener signal as the input. Assuming that one IRF's component has been controlled, we used the normalized cross-correlogram between outputs as an estimator for the unknown second one. According to the space we chosen, the origin $L_2$-integrability condition on IRF's components was reinforced by weighed integrals which were dependent on FRFs of both kernels.

\paragraph{Acknowlegements.} This research was partially supported by the MINECO/FEDER under Grant MTM2015-69493-R. The first author would like to thank the Research Group on Advanced Statistical Modelling (coordinators Prof P.~Puig-Casado and J.~del Castillo-Franquet) for hospitality during her research stay at the Universitat Aut\`onoma de Barcelona.


\end{document}